\begin{document}

\title{A novel interpolation–regression approach for function approximation on the disk and its application to cubature formulas
}

\titlerunning{Function approximation on the disk}        

\author{Francesco Dell'Accio \and 
        Francisco Marcellán \and 
        Federico Nudo 
}


\institute{  Francesco Dell'Accio \at
             Department of Mathematics and Computer Science, University of Calabria, Rende (CS), Italy\\ 
             \email{francesco.dellaccio@unical.it} 
         \and 
             Francisco Marcellán \at
              Departamento de Matemáticas, Universidad Carlos III de Madrid, Spain \\
              \email{pacomarc@ing.uc3m.es}
         \and 
            Federico Nudo (corresponding author) \at
              Department of Mathematics and Computer Science, University of Calabria, Rende (CS), Italy \\
               Istituto Nazionale di Alta Matematica, Roma, Italy\\
              \email{federico.nudo@unical.it}
}

\date{Received: date / Accepted: date}

\maketitle

\begin{abstract}

The interpolation–regression approximation is a powerful tool in numerical analysis for reconstructing functions defined on square or triangular domains from their evaluations at a regular set of nodes. The importance of this technique lies in its ability to avoid the Runge phenomenon. In this paper, we present a polynomial approximation method based on an interpolation–regression approach for reconstructing functions defined on disk domains from their evaluations at a general set of sampling points. Special attention is devoted to the selection of interpolation nodes to ensure numerical stability, particularly in the context of Zernike polynomials. As an application, the proposed method is used to derive accurate cubature formulas for numerical integration over the disk.

\keywords{Polynomial approximation \and orthogonal polynomials \and interpolation-regression operator \and  Zernike polynomials}
\subclass{65D05}
\end{abstract}

\section{Introduction}
The approximation of functions on disk domains has long been a topic of significant interest in applied mathematics~\cite{wermer1964approximation, trefethen1981rational, gutknecht1983nonuniqueness, hangelbroek2008error}. In many practical applications, such as optical engineering~\cite{born1970principles}, aberrometry of the human eye~\cite{carvalho2005accuracy}, corneal surface modeling~\cite{smolek2003zernike, smolek2005goodness}, and other topics in optics and ophthalmology~\cite{nam2009zernike}, the domain of interest is naturally circular. Unisolvent configurations of nodes for polynomial approximation over these domains have been studied in~\cite{bojanov2003polynomial}. However, in real-world applications, we often face the challenge of reconstructing a function defined on a disk domain from its values at a discrete set of points, typically obtained by sampling a physical phenomenon. Although standard polynomial interpolation is conceptually straightforward, it suffers from numerical instabilities, especially when the interpolation nodes are not carefully chosen.

\noindent
The problem of reconstructing a function using a polynomial approximation that fully exploits the information available from a regular set of nodes has been previously addressed through the introduction of the constrained mock-Chebyshev least squares operator for square domains~\cite{DellAccio:2022:GOT, DellAccio:2024:AEO}, and later extended to triangular domains~\cite{de2024mixed}. The core idea behind this approach is to interpolate the unknown function on a carefully selected subset of nodes, chosen to be close to an optimal configuration for the interpolation process—such as Chebyshev nodes on the interval, Padua nodes on the square, and Leja nodes on the triangle. The remaining nodes are then employed for a simultaneous regression to improve the accuracy of the approximation. This strategy has been successfully applied to develop quadrature formulas~\cite{DellAccio:2022:CMC, DellAccio:2022:AAA, DellAccio:2023:PIR, dell2024quadrature}, to numerically solve Fredholm integral equations~\cite{DellAccio:2024:NAO}, to approximate derivatives~\cite{DellAccio:2024:PAO}, and has recently been extended to Hermite interpolation~\cite{dell2025constrained}.

\medskip

The goal of this work is twofold: \begin{itemize}
\item First, we extend the constrained mock-Chebyshev least squares framework to functions defined on a disk domain, thereby broadening its applicability to circular geometries.
\item Second, we apply the proposed function approximation technique to define accurate cubature formulas for numerical integration on the disk. Accurate integration over disk domains is crucial in many fields, including image analysis and physics~\cite{boersma1993solution, shevchuk2003impingement, punzi2008meshless, ramos2016optimal, ramaciotti2017some, sommariva2017numerical, sommariva2022cubature}.
\end{itemize}

\noindent 
The paper is organized as follows. In Section~\ref{SecInterp}, we recall the definition of Zernike polynomials and the Bos array. We then introduce our novel interpolation–regression operator for disk domains and provide a complete analysis of the behavior of its operator norm. In Section~\ref{SecCubFor}, we apply the proposed approximation framework to the construction of cubature formulas on the disk. Finally, in Section~\ref{SecNumEx}, we present several numerical experiments that illustrate the effectiveness of our approach.

\section{Polynomial approximation on the disk via an interpolation-regression approach}\label{SecInterp}
Let $r > 0$ and consider the disk of radius $r$ 
\begin{equation*}
    D_r = \left\{ \left(r \cos(\theta), r \sin(\theta)\right) \, : \, \theta \in [0,2\pi) \right\}.
\end{equation*}
Without loss of generality, we assume that $r=1$. Our goal is to reconstruct an unknown real-valued function $f$ defined on $D_1$, using only its evaluations on a set of sampling points. Specifically, we consider the set
\begin{equation}\label{pointset}
    S_{n,k}=\left\{\boldsymbol{x}_{\eta ,\kappa}=\left(r_\eta\cos \left(\theta_\kappa\right),r_\eta\sin\left(\theta_\kappa\right)\right)\, :\, \eta=0,\dots,n, \, \kappa=0,\dots,k\right\}\subset D_1,
\end{equation}
where $0<r_0 < r_1 < \cdots < r_n \leq 1$ and $0 \leq \theta_0 < \theta_1 < \cdots < \theta_k < 2\pi$, are increasing sequences of radii and angles, respectively.  We notice that
\begin{equation*}
    \#\left(S_{n,k}\right) = (n+1)(k+1),
\end{equation*}
where $\#(\cdot)$ denotes the cardinality operator. In general, the number of these points does not match the dimension of a bivariate polynomial space, and thus a unique polynomial interpolant might not exist. This motivates the development of an interpolation-regression method that uses all the points of $S_{n,k}$ to construct a high quality polynomial approximation.

\subsection{Optimal set of interpolation nodes on the disk}

The Zernike polynomials are usually defined in polar coordinates using the double-index notation~\cite{von1934beugungstheorie, dunkl2014orthogonal, ramos2016optimal} as
\begin{equation*}
Z_m^l(r, \theta) := \gamma_m^lR_m^{|l|}(r)  \chi_l(\theta), \quad \text{with} \quad m\in\mathbb{N}, \quad  l\in \mathbb{Z},\quad \left|l\right| \leq m,\quad m - \left|l\right| \in 2\mathbb{N}, 
\end{equation*}
where the radial part is given by
\begin{equation*}
R_m^{|l|}(r) := \sum_{s=0}^{\frac{m - |l|}{2}} (-1)^s \frac{(m - s)!}{s! \left(\frac{m + |l|}{2} - s\right)! \left(\frac{m - |l|}{2} - s\right)!} r^{m - 2s},
\end{equation*}
the angular part is given by
\begin{equation*}
\chi_l(\theta) := 
\begin{cases}
\cos(l\theta), & \text{if } l \geq 0, \\
\sin(|l|\theta), & \text{if } l < 0,
\end{cases}    
\end{equation*}
and $\gamma_m^l$ are normalization constants~\cite{ramos2016optimal}. Equivalently, the radial component $R_m^{|l|}(r)$,  
can be written as
 \begin{eqnarray*}
R_m^{|l|}(r)
&=& (-1)^{\frac{m - |l|}{2}}\, r^{|l|} P_{\frac{m - |l|}{2}}^{(|l|,0)}\left(1 - 2r^2\right) \\
&=& \binom{m}{\frac{m + |l|}{2}} r^m
   {}_2F_{1}\left(-\frac{m + |l|}{2},-\frac{m - |l|}{2}; -m; r^{-2}\right) \\
&=& (-1)^{\frac{m - |l|}{2}} \binom{\tfrac{m + |l|}{2}}{|l|}\, r^{|l|}
   {}_2F_{1}\left(1 + \frac{m + |l|}{2}, -\frac{m - |l|}{2}; 1 + |l|; r^2\right),
\end{eqnarray*}
where $P_{n}^{(\alpha,\beta)}$ is the classical Jacobi polynomial of degree $n$ with parameters $(\alpha,\beta)$ and ${}_2F_{1}$ is the Gaussian hypergeometric function.\\

\noindent
For a fixed $m\in\mathbb{N}$, the Zernike polynomials satisfy  
\begin{equation*}
\operatorname{span}\left\{Z_m^l(r,\theta) \, :\, l\in\mathbb{Z}, \, |l|\le m, \, m-|l|\in 2\mathbb{N}\right\}=\mathbb{P}_m\left(\mathbb{R}^2\right),
\end{equation*}
where $\mathbb{P}_m\left(\mathbb{R}^2\right)$ denotes the space of bivariate polynomials of total degree $m$. Moreover, they constitute an orthonormal basis of $\mathbb{P}_m\left(\mathbb{R}^2\right)$ on the unit disk with respect to the weight $rd\theta dr$, that is
\begin{equation*}
\left\langle Z_m^l, Z_{m^\prime}^{l^\prime} \right\rangle_{D_1} := \int_0^1 \int_0^{2\pi} Z_m^l(r, \theta) Z_{m'}^{l^\prime}(r, \theta)\, r  d\theta dr = 0, \quad  (m, l) \neq \left(m^\prime, l^\prime\right),
\end{equation*}
and
\begin{equation*}
\left\langle Z_m^l, Z_{m}^{l} \right\rangle_{D_1} = 1.
\end{equation*}

\noindent
To ensure numerical stability when interpolating functions using the Zernike polynomial basis, the selection of interpolation nodes on the disk is crucial. A common metric for assessing a node configuration is the condition number of the collocation matrix obtained by evaluating the Zernike polynomial basis at the nodes. Among the various known unisolvent configurations on the disk, one of the most effective is the \emph{Bos array}~\cite{BosThesis, bojanov2003polynomial, ramos2016optimal}, which arranges nodes along concentric circles determined by a given maximal radial order $m$. More precisely, let us define a sequence of decreasing radii
\begin{equation*}
1 \ge \rho_1 > \rho_2 > \cdots > \rho_K \ge 0, \quad K = K(m) = \left\lfloor \frac{m}{2} \right\rfloor + 1.    
\end{equation*}
On each circle of radius $\rho_\nu$, we place
\begin{equation*}
    m_\nu=2m+5-4\nu
\end{equation*}
equispaced nodes. In other words, on each circle of radius $\rho_\nu$, the nodes are defined by 
\begin{equation*}
\boldsymbol{p}_{\nu,\sigma} = \left( \rho_\nu\cos\left(\frac{2\pi \sigma}{m_\nu}\right),  \rho_\nu\sin\left(\frac{2\pi \sigma}{m_\nu}\right) \right), \quad \sigma = 0,1,\dots, m_\nu-1.    
\end{equation*}
It is straightforward to verify that
\begin{equation*}
    \sum_{\nu=1}^K m_\nu=\frac{(m+1)(m+2)}{2}=:M,
\end{equation*}
which exactly matches the dimension of $\mathbb{P}_m\left(\mathbb{R}^2\right)$. \\

\noindent
To simplify the notation, we sometimes re-index the nodes and the Zernike polynomials using a single index, that is
\begin{eqnarray}
    \boldsymbol{x}_{\eta,\kappa}&\rightarrow& \boldsymbol{x}_{i}, \quad i=1,\dots,(n+1)(k+1), \label{not1}\\
     \boldsymbol{p}_{\nu,\sigma}&\rightarrow& \boldsymbol{p}_{j}, \quad j=1,\dots,M,  \label{not2}\\
      Z_{m}^{l}&\rightarrow& Z_{\ell}, \quad \ell=1,\dots,M. \label{not3}
\end{eqnarray}
In this setting, we consider the Gram matrix $\mathcal{A}_m\in\mathbb{R}^{M\times M}$, defined as 
\begin{equation*}
\left( \mathcal{A}_m \right)_{j,\ell} = Z_{\ell}\left(\boldsymbol{p}_j\right), \quad j,\ell=1,2,\dots,M. 
\end{equation*}
This matrix is used to quantify the stability of the nodes via its condition number 
\begin{equation*}
\kappa_{\infty}\left(\mathcal{A}_m\right)=\left\lVert \mathcal{A}_m\right\rVert_{\infty} \left\lVert \mathcal{A}_m^{-1}\right\rVert_{\infty}.
\end{equation*}
Since the radii are the main parameters affecting the conditioning of $\mathcal{A}_m$, we consider the following minimization problem
\begin{equation*}
    \min \left\{ \kappa_{\infty}\left(\mathcal{A}_m\right) \, :\, 1 > \rho_1 > \rho_2 > \cdots > \rho_K \ge 0 \right\}.
\end{equation*}
Although no explicit formula for the optimal radii exists, numerical results suggest a good approximation via least squares fitting~\cite{cuyt2012radial, carnicer2014interpolation}. Specifically, the radii can be approximated by
\begin{equation*}
\rho_\nu^{\mathrm{opt}} = \rho_\nu^{\mathrm{opt}}(m) = 1.1565 \zeta_{\nu,m} - 0.76535 \zeta_{\nu,m}^2 + 0.60517 \zeta_{\nu,m}^3,    
\end{equation*}
where $\zeta_{\nu,m}$ are the zeros of the  Chebyshev polynomial of the first kind of degree $m+1$, that is
\begin{equation*}
    \zeta_{\nu,m} = \cos\left(\frac{(2\nu-1)\pi}{2(m+1)}\right), \quad \nu = 1,2,\dots,K.
\end{equation*}
Once the radii are fixed, the \textit{optimal nodes} are given by
\begin{equation}\label{optpoints}
\boldsymbol{p}_{\nu,\sigma}^{\mathrm{opt}} = \left(\rho_\nu^{\mathrm{opt}}\cos\left(\frac{2\pi (\sigma-1)}{m_\nu}\right), \rho_\nu^{\mathrm{opt}}\sin\left(\frac{2\pi (\sigma-1)}{m_\nu}\right) \right), \quad \sigma=1,\dots, m_\nu,\quad \nu=1,\dots,K.    
\end{equation}
We denote the set of optimal nodes by
\begin{equation*}
    \Sigma_m^{\mathrm{opt}}=\left\{\boldsymbol{p}_{\nu,\sigma}^{\mathrm{opt}}\, : \, \sigma=1,\dots, m_\nu,\, \nu=1,\dots,K \right\},
\end{equation*}
see Fig~\ref{figrec1}.
In principle, if $f$ were known over the whole domain $D_1$, one could construct an accurate Zernike polynomial interpolant using the optimal nodes of the set $\Sigma_m^{\mathrm{opt}}$. However, in practical applications, $f$ is typically sampled on a general grid $S_{n,k}$, which often reflects measurements of a physical phenomenon and thus precludes the direct application of this strategy. Motivated by this limitation, we propose an interpolation-regression approach that exploits all available data to construct an accurate polynomial approximation of $f$.

\begin{figure}
  \centering
\includegraphics[width=0.49\textwidth]{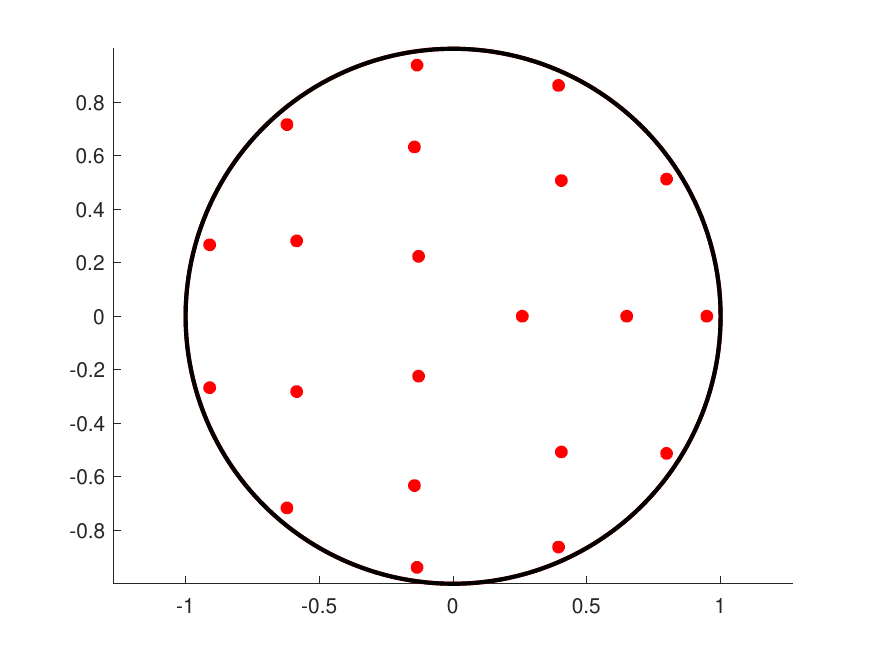} 
\includegraphics[width=0.49\textwidth]{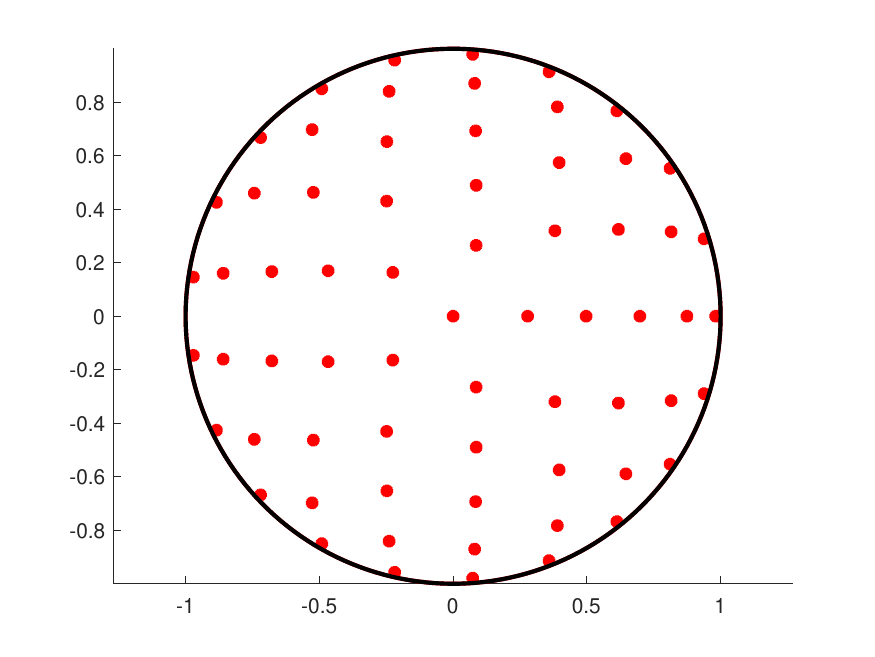} 
\includegraphics[width=0.49\textwidth]{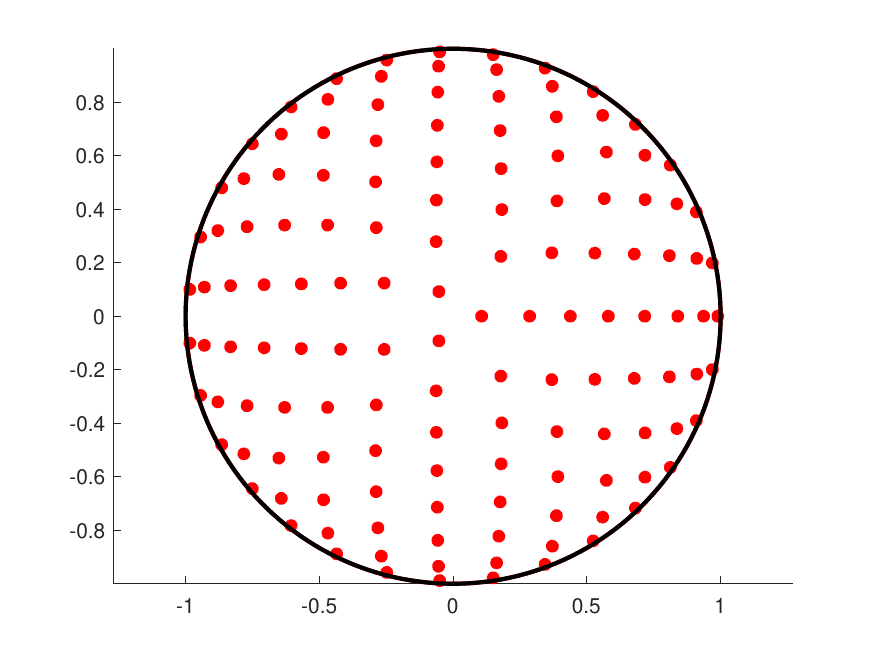}
\includegraphics[width=0.49\textwidth]{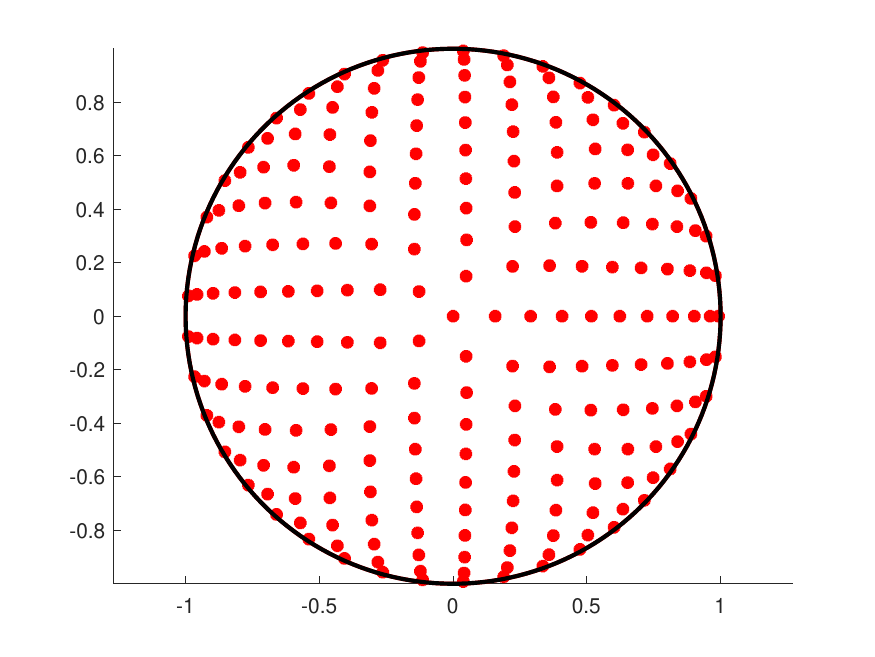}
 \caption{Plot of the optimal nodes of the set $\Sigma_m^{\mathrm{opt}}$, for $m=5,10,15,20$.}
 \label{figrec1}
\end{figure}

\subsection{Interpolation-regression operator on the disk}

Let $f\in C\left(D_1,\mathbb{R}\right)$ be an unknown continuous real-valued function defined on the unit disk $D_1$, and assume that its evaluations are available only on the set $S_{n,k}$ defined in~\eqref{pointset}. For simplicity, we consider the case in which $k=n$ and denote by
\begin{equation*}
n^{\star}=\#\left(S_{n,n}\right)=(n+1)^2. 
\end{equation*}

\noindent
In order to define our interpolation-regression operator, some settings are needed. Let $m,\tilde{r}\in\mathbb{N}$ be such that
 \begin{equation}\label{mpdef}
     \tilde{r}>m, \quad \tilde{R}:=\frac{(\tilde{r}+1)(\tilde{r}+2)}{2}<n^{\star}, \quad M:=\frac{(m+1)(m+2)}{2},
 \end{equation}
and consider a basis of the polynomial space $\mathbb{P}_{\tilde{r}}\left(\mathbb{R}^2\right)$
\begin{equation}\label{basis}
    \mathcal{B}_{\tilde{r}}=\left\{u_1,\dots,u_{\tilde{R}}\right\}, 
\end{equation} 
ensuring that
\begin{equation}
    \operatorname{span}\left\{u_1,\dots,u_M\right\}=\mathbb{P}_m\left(\mathbb{R}^2\right). 
\end{equation}
Using the notation in~\eqref{not1}, we define the matrix  
\begin{equation}\label{matrixVandC}
  \mathcal{M}:=\begin{bmatrix}
u_1\left(\boldsymbol{x}_1\right) & u_2\left(\boldsymbol{x}_1\right) & \dots & u_{\tilde{R}}\left(\boldsymbol{x}_1\right)\\
u_1\left(\boldsymbol{x}_2\right) & u_2\left(\boldsymbol{x}_2\right) & \dots & u_{\tilde{R}}\left(\boldsymbol{x}_2\right)\\
\vdots  & \vdots  & \dots & \vdots  \\
u_1\left(\boldsymbol{x}_{n^{\star}}\right) & u_2\left(\boldsymbol{x}_{n^{\star}}\right) & \dots & u_{\tilde{R}}\left(\boldsymbol{x}_{n^{\star}}\right)\\
\end{bmatrix}\in\mathbb{R}^{n^{\star}\times {\tilde{R}}},
\end{equation}
and assume that the set of samplings $S_{n,n}$ satisfies 
\begin{equation}\label{assump1}
\operatorname{rank}\left(\mathcal{M}\right)=\tilde{R}.
\end{equation}
In this case, the nodes of $S_{n,n}$ are said to be \textit{admissible} with respect to the degree $\tilde{r}$.

\begin{remark}
We note that the assumption~\eqref{assump1} is not restrictive in practice. In fact, if the set $S_{n,n}$ does not satisfy it for a fixed $\tilde{r}$, one can either increase the number of sampling points or decrease the degree $\tilde{r}$ until the assumption is satisfied.
\end{remark}

\noindent
Our goal is to introduce a polynomial approximation operator that employs all nodes of $S_{n,n}$ while leveraging, in a certain sense, the optimality of the points of $\Sigma_{m}^{\mathrm{opt}}$. \\

\noindent 
The idea is to construct this approximation so that: 
\begin{itemize}
    \item[$i)$] it interpolates the unknown function $f$ on an \emph{optimal subset} of $S_{n,n}$,  consisting of the nodes closest to those of $\Sigma_{m}^{\mathrm{opt}}$;
    \item[$ii)$] it exploits the remaining nodes to enhance the approximation accuracy through a simultaneous regression. 
\end{itemize}
In analogy to the constrained mock-Chebyshev least squares operator on square domains~\cite{DellAccio:2022:GOT, DellAccio:2024:AEO} and on triangular domains~\cite{de2024mixed}, we consider the set of \textit{mock-optimal} nodes
\begin{equation*}
\Sigma_m^{\prime}:=\left\{\boldsymbol{x}^{\prime}_{\nu,\sigma} \, : \, \sigma=1,\dots,m_\nu, \, \nu=1,\dots,K\right\}\subset S_{n,n},    
\end{equation*}
where $\boldsymbol{x}^{\prime}_{\nu,\sigma}$ is the solution of the minimization problem
\begin{equation}\label{minprobs}
\min_{\boldsymbol{x}_{\eta,\kappa}\in S_{n,n}}\left\lVert \boldsymbol{x}_{\eta,\kappa}-\boldsymbol{p}_{\nu,\sigma}^{\mathrm{opt}} \right\rVert,
\end{equation}
where $\left\lVert \cdot\right\rVert$ is the Euclidean norm in $\mathbb{R}^2$. \\

\noindent
Specifically, for each optimal node, we select the closest point from the set $S_{n,n}$; if that point has already been selected, we choose the next closest one, and so on. The Algorithm~\ref{mock-det} formalizes this selection process and   ensures that
\begin{equation}\label{assump2}
    \#\left(\Sigma_m^{\prime}\right)=M.
\end{equation}

\begin{algorithm}
\caption{Selection of mock-optimal nodes}
\begin{algorithmic}[1]
\STATE $n,m\in\mathbb{N}$
\STATE $S_{n,n} \gets \left[\boldsymbol{x}_{\eta ,\kappa}(:,1),\, \boldsymbol{x}_{\eta ,\kappa}(:,2)\right]$
\STATE $\Sigma_m^{\text{opt}} \gets \left[\boldsymbol{p}_{\nu,\sigma}^{\mathrm{opt}}(:,1),\boldsymbol{p}_{\nu,\sigma}^{\mathrm{opt}}(:,2)\right]$
\STATE $S_{n,n}^{\text{copy}} \gets S_{n,n}$
\STATE $\Sigma_m^{\prime} \gets \text{zeros}\left(\text{size}\left(\Sigma_m^{\text{opt}}\right)\right)$
\FOR{$j = 1$ to $M$}
    \STATE \(d_{1} \gets \left|S_{n,n}^{\text{copy}}(:,1) - \Sigma_m^{\text{opt}}(j,1)\right|\)
    \STATE \(d_{2} \gets \left|S_{n,n}^{\text{copy}}(:,2) - \Sigma_m^{\text{opt}}(j,2)\right|\)
    \STATE \(d \gets d_{1} + d_{2}\)
    \STATE \(i \gets \text{argmin}(d)\)
    \STATE \(\Sigma_{m}^{\prime}(j,:) \gets S_{n,n}^{\text{copy}}(i,:)\)
    \STATE Remove the \(i\)-th row from \(S_{n,n}^{\text{copy}}\)
\ENDFOR
\end{algorithmic}
\label{mock-det}
\end{algorithm}

\noindent
Using the notation~\eqref{not2}, we  consider the Gram matrix
\begin{equation}\label{matrixC}
\mathcal{C}:=\begin{bmatrix}
u_1\left(\boldsymbol{x}^{\prime}_1\right) & u_2\left(\boldsymbol{x}^{\prime}_1\right) & \dots & u_{\tilde{R}}\left(\boldsymbol{x}^{\prime}_1\right)\\
u_1\left(\boldsymbol{x}^{\prime}_2\right) & u_2\left(\boldsymbol{x}^{\prime}_2\right) & \dots & u_{\tilde{R}}\left(\boldsymbol{x}^{\prime}_2\right)\\
\vdots  & \vdots  & \dots & \vdots  \\
u_1\left(\boldsymbol{x}^{\prime}_{M}\right) & u_2\left(\boldsymbol{x}^{\prime}_{M}\right) & \dots & u_{\tilde{R}}\left(\boldsymbol{x}^{\prime}_{M}\right)\\
\end{bmatrix}\in\mathbb{R}^{M\times {\tilde{R}}}
\end{equation}
and the vectors
\begin{equation}\label{bandd}
\boldsymbol{b}:=\left[f\left(\boldsymbol{x}_1\right),\dots,f\left(\boldsymbol{x}_{n^{\star}}\right)\right]^T, \quad  \boldsymbol{d}:=\left[f\left(\boldsymbol{x}_1^{\prime}\right),\dots,f\left(\boldsymbol{x}_{M}^{\prime}\right)\right]^T.
\end{equation}
We observe that if the nodes of $\Sigma_m^{\prime}$ are sufficiently close to those of the set $\Sigma_m^{\mathrm{opt}}$, then the matrix $\mathcal{C}$ satisfies~\cite{viannuovo}
\begin{equation*}
\operatorname{rank}(\mathcal{C}) = M.
\end{equation*}

\noindent
We then define the interpolation-regression operator
\begin{equation}\label{operatorPhat}
\hat{\Pi}_{{\tilde{r}},n}: f\in C\left(D_1,\mathbb{R}\right) \rightarrow \hat{\Pi}_{\tilde{r},n}[f]:=\sum\limits_{i=1}^{\tilde{R}}\hat{a}_iu_i\in \mathbb{P}_{\tilde{r}}\left(\mathbb{R}^2\right),
\end{equation}
where the vector of coefficients $\hat{\boldsymbol{a}}:=\left[\hat{a}_1,\dots,\hat{a}_{\tilde{R}}\right]^T$ can be computed solving the so-called KKT linear system~\cite{Boyd:2018:ITA}
\begin{equation}\label{LagrangeMultmethod}
    \begin{bmatrix}
\mathcal{W} & \mathcal{C}^T  \\
\mathcal{C} & 0  \\
\end{bmatrix}
\begin{bmatrix}
\hat{\boldsymbol{a}} \\
\hat{\boldsymbol{z}} \\
\end{bmatrix}=
\begin{bmatrix}
2 \mathcal{M}^T\boldsymbol{b} \\
\boldsymbol{d} \\
\end{bmatrix}, \quad  \mathcal{W}:=2 \mathcal{M}^T\mathcal{M},
\end{equation}
and $\hat{\boldsymbol{z}}:=\left[\hat{z}_1,\dots,\hat{z}_{M}\right]^T$ is the Lagrange multipliers vector. This system computes $\hat{\boldsymbol{a}}$ as the solution of
the following least squares problem with linear equality constraints
\begin{equation}
\label{LSProblem}
\min_{\boldsymbol{a}\in \mathbb{R}^{{\tilde{R}}}}\left\lVert \mathcal{M}\boldsymbol{a}-\boldsymbol{b}\right\rVert_2, \quad \text{subject to} \quad  \mathcal{C} \hat{\boldsymbol{a}} = \boldsymbol{d},
\end{equation}
thereby enforcing properties $i)$ and $ii)$. The matrix
\begin{equation}
    \label{KKTmatrix}
  \mathcal{K}= \begin{bmatrix}
\mathcal{W} & \mathcal{C}^T  \\
\mathcal{C} & 0  \\
\end{bmatrix}
\end{equation}
is called KKT matrix. \\

\begin{remark}
Since both $\mathcal{M}$ and $\mathcal{C}$ have full rank (cf. \eqref{assump1} and \eqref{assump2}), the matrix $\mathcal{K}$ defined in~\eqref{KKTmatrix} is nonsingular~\cite{Boyd:2018:ITA}. Therefore, the vector of coefficients $\hat{\boldsymbol{a}}$ is uniquely determined and then the operator $\hat{\Pi}_{\tilde{r},n}$ is well defined.
\end{remark}

\begin{remark}
The operator $\hat{\Pi}_{\tilde{r},n}$ has the following properties:
\begin{itemize}
    \item[-] It is linear, i.e.,
    \[
    \hat{\Pi}_{\tilde{r},n}[\lambda f+\mu g]=\lambda \hat{\Pi}_{\tilde{r},n}[f] + \mu \hat{\Pi}_{\tilde{r},n}[g],\quad f,g\in C\left(D_1,\mathbb{R}\right),\ \lambda,\mu\in\mathbb{R};
    \]
    \item[-] Its range is $\mathbb{P}_{\tilde{r}}(\mathbb{R}^2)$;
    \item[-] It reproduces bivariate polynomials of degree $\le\tilde{r}$, i.e.,
    \begin{equation*}
         \hat{\Pi}_{\tilde{r},n}[q]=q, \text{ for each } q\in\mathbb{P}_{\tilde{r}}\left(\mathbb{R}^2\right);
    \end{equation*}
    \item[-] It is idempotent, that is, $\hat{\Pi}_{\tilde{r},n}\circ \hat{\Pi}_{\tilde{r},n}=\hat{\Pi}_{\tilde{r},n}$;
    \item[-] $\hat{\Pi}_{\tilde{r},n}[f]$ is completely determined by the evaluations of $f$ at the nodes of $S_{n,n}$.
\end{itemize}
\end{remark}

\begin{remark}
All our reasoning has been carried out for point sets $S_{n,k}$ consisting of points that can be uniquely expressed in polar coordinates. This assumption has been made only for simplicity. In fact, the same arguments can be repeated when the point set includes the origin, that is
\begin{equation*}
    \tilde{S}_{n,k}=S_{n,k}\cup \{\boldsymbol{0}=(0,0)\}.
\end{equation*}
In this case, we get
\begin{equation*}
    \#\left(\tilde{S}_{n,k}\right)=(n+1)(k+1)+1.
\end{equation*}
\end{remark}

\noindent
In order to present an error bound for the operator~\eqref{operatorPhat}, we recall the \emph{direct elimination method} for solving least squares problems with linear equality constraints~\cite{bjork1967iterative, bjorck2024numerical}. \\

\noindent
Using the QR factorization of $\mathcal{C}$, it results
\begin{equation*}
    \mathcal{C} = \mathcal{Q}_\mathcal{C} \mathcal{R}_\mathcal{C},\quad \mathcal{Q}_\mathcal{C} \in \mathbb{R}^{M\times M}, \quad \mathcal{Q}_\mathcal{C}\mathcal{Q}_\mathcal{C}^T=\mathcal{I}
\end{equation*}
and applying $\mathcal{Q}_\mathcal{C}^T$ to the constraints equation yields
\begin{equation}\label{sistema_intermedio}
\mathcal{R}_{\mathcal{C}} \hat{\boldsymbol{a}} = \mathcal{Q}_{\mathcal{C}}^T \boldsymbol{d}.
\end{equation}
We decompose $\mathcal{R}_\mathcal{C}$ as
\begin{equation*}
\mathcal{R}_{\mathcal{C}}=\left[\mathcal{R}_{11},\mathcal{R}_{12}\right], \quad \mathcal{R}_{11}\in\mathbb{R}^{M\times M}, \quad \mathcal{R}_{12}\in\mathbb{R}^{M\times \left(\tilde{R}-M\right)}. 
\end{equation*}
Then, writing $\hat{\boldsymbol{a}}=\begin{bmatrix}\hat{\boldsymbol{a}}_1\\ \hat{\boldsymbol{a}}_2\end{bmatrix}$ with $\hat{\boldsymbol{a}}_1\in\mathbb{R}^{M}$ and $\hat{\boldsymbol{a}}_2\in\mathbb{R}^{\tilde{R}-M}$, the linear system~\eqref{sistema_intermedio} becomes
\begin{equation*}
\left[\mathcal{R}_{11},\mathcal{R}_{12}\right]
\begin{bmatrix}
\hat{\boldsymbol{a}}_1 \\
\hat{\boldsymbol{a}}_2
\end{bmatrix}
= \mathcal{Q}_\mathcal{C}^T\boldsymbol{d}.
\end{equation*}
Since $\mathcal{R}_{11}$ is nonsingular, we can write
\begin{equation}\label{a1eq}
\hat{\boldsymbol{a}}_1 = \mathcal{R}_{11}^{-1}\left(\mathcal{Q}_\mathcal{C}^T\boldsymbol{d} - \mathcal{R}_{12}\hat{\boldsymbol{a}}_2\right).
\end{equation}
Substituting $\hat{\boldsymbol{a}}_1$ into the original least squares problem leads to an unconstrained problem in terms of $\hat{\boldsymbol{a}}_2$. In fact, by setting
\begin{equation*}
  \mathcal{M} = \left[\mathcal{M}_1, \mathcal{M}_2\right], \quad \mathcal{M}_1\in\mathbb{R}^{n^{\star}\times M}, \quad \mathcal{M}_{2}\in\mathbb{R}^{n^{\star}\times \left(\tilde{R}-M\right)},
\end{equation*}
the problem~\eqref{LSProblem} reduces to the unconstrained minimization problem
\begin{equation*}
      \min_{\boldsymbol{a}_2\in \mathbb{R}^{\tilde{R}-M}} \left\| \mathcal{V}_1 \boldsymbol{a}_2 - \boldsymbol{b}_1\right\|_2,
\end{equation*}
where
\begin{equation}\label{M1}
    \mathcal{V}_1=\mathcal{M}_2-\mathcal{M}_1\mathcal{R}_{11}^{-1}\mathcal{R}_{12}\in \mathbb{R}^{n^{\star}\times \left(\tilde{R}-M\right)}
\end{equation}
and
\begin{equation}\label{B1}
\boldsymbol{b}_1 = \boldsymbol{b} - \mathcal{M}_1 \mathcal{R}_{11}^{-1} \mathcal{Q}_{\mathcal{C}}^T\boldsymbol{d}\in \mathbb{R}^{n^{\star}}.
\end{equation}
The solution is then given by
\begin{equation}\label{a2eq}
\hat{\boldsymbol{a}}_2=\left(\mathcal{V}_1^{T}\mathcal{V}_1\right)^{-1}\mathcal{V}_1^{T}\boldsymbol{b}_1.
\end{equation}
This approach yields the solution of problem~\eqref{LSProblem}; for more information, see~\cite{bjorck2024numerical}.

\noindent
In the following theorem, we obtain an upper bound of the operator norm
\begin{equation*}
\left\lVert\hat{\Pi}_{\tilde{r},n}\right\rVert_\infty:=\sup_{\left\lVert f\right\rVert_\infty=1} \left\lVert\hat{\Pi}_{\tilde{r},n}[f]\right\rVert_\infty.
\end{equation*}

\begin{theorem}
\label{Thm:EstimateOfNorm}
The norm of the interpolation-regression operator~\eqref{operatorPhat} satisfies
\begin{equation}
\label{eq:BoundThm}
    \left\lVert \hat{\Pi}_{\tilde{r},n} \right\rVert_{\infty} \le \max_{i=1,\dots,\tilde{R}}\left\lVert u_i \right\rVert_{\infty} \left(K_1(n,m)+K_2(n,m)\right)
\end{equation}
where
\begin{equation}\label{K2}
K_1(n,m)=\left\lVert \mathcal{R}_{11}^{-1} \right\rVert_{1} \left(M\left\lVert \mathcal{Q}_{\mathcal{C}}^{T} \right\rVert_{1}+ \left\lVert \mathcal{R}_{12}\right\rVert_{1}K_2(n)\right),
\end{equation}
\begin{equation}\label{K21}
K_2(n,m)=\left\lVert(\mathcal{V}_1^T\mathcal{V}_1)^{-1} \mathcal{V}_1^T \right\rVert_1\left(n^{\star}+M\left\lVert  \mathcal{M}_1 \mathcal{R}_{11}^{-1}\mathcal{Q}_{\mathcal{C}}^T\right\rVert_1\right).
\end{equation}
\end{theorem}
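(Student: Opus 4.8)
The plan is to reduce the whole statement to an $\ell^1$-bound on the coefficient vector $\hat{\boldsymbol{a}}$. Indeed, from the representation $\hat{\Pi}_{\tilde{r},n}[f]=\sum_{i=1}^{\tilde{R}}\hat a_i u_i$ one gets, for every $\boldsymbol{x}\in D_1$,
\[
\left|\hat{\Pi}_{\tilde{r},n}[f](\boldsymbol{x})\right|\le \sum_{i=1}^{\tilde{R}}|\hat a_i|\,|u_i(\boldsymbol{x})|\le \max_{i=1,\dots,\tilde{R}}\|u_i\|_\infty\;\|\hat{\boldsymbol{a}}\|_1,
\]
hence $\|\hat{\Pi}_{\tilde{r},n}[f]\|_\infty\le \max_i\|u_i\|_\infty\,\|\hat{\boldsymbol{a}}\|_1$. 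It therefore suffices to prove $\|\hat{\boldsymbol{a}}\|_1\le\big(K_1(n,m)+K_2(n,m)\big)\|f\|_\infty$ and then take the supremum over $\|f\|_\infty=1$.

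To estimate $\|\hat{\boldsymbol{a}}\|_1=\|\hat{\boldsymbol{a}}_1\|_1+\|\hat{\boldsymbol{a}}_2\|_1$ I would invoke the explicit direct–elimination formulas \eqref{a1eq}--\eqref{a2eq}, together with the submultiplicativity $\|Ax\|_1\le\|A\|_1\|x\|_1$ of the vector $\ell^1$-norm and the matrix norm it induces. The starting point is that the data vectors of \eqref{bandd} satisfy $\|\boldsymbol{b}\|_1=\sum_{i=1}^{n^\star}|f(\boldsymbol{x}_i)|\le n^\star\|f\|_\infty$ and $\|\boldsymbol{d}\|_1=\sum_{j=1}^{M}|f(\boldsymbol{x}'_j)|\le M\|f\|_\infty$. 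Applying the triangle inequality to \eqref{B1} then gives $\|\boldsymbol{b}_1\|_1\le\big(n^\star+M\|\mathcal{M}_1\mathcal{R}_{11}^{-1}\mathcal{Q}_{\mathcal{C}}^T\|_1\big)\|f\|_\infty$, and feeding this into \eqref{a2eq} yields $\|\hat{\boldsymbol{a}}_2\|_1\le K_2(n,m)\|f\|_\infty$ with $K_2(n,m)$ as in \eqref{K21}. Next, applying the same inequalities to \eqref{a1eq} and substituting the bounds just obtained for $\|\boldsymbol{d}\|_1$ and $\|\hat{\boldsymbol{a}}_2\|_1$ produces
\[
\|\hat{\boldsymbol{a}}_1\|_1\le\|\mathcal{R}_{11}^{-1}\|_1\big(M\|\mathcal{Q}_{\mathcal{C}}^T\|_1+\|\mathcal{R}_{12}\|_1 K_2(n,m)\big)\|f\|_\infty=K_1(n,m)\|f\|_\infty,
\]
with $K_1(n,m)$ as in \eqref{K2}. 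Adding the two contributions gives the desired bound on $\|\hat{\boldsymbol{a}}\|_1$, and hence \eqref{eq:BoundThm}.

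I do not expect a genuine obstacle here: the proof is a careful bookkeeping of norm inequalities once the direct–elimination representation of $\hat{\boldsymbol{a}}$ is available. The two points that require attention are (i) remaining consistent in pairing the vector $\ell^1$-norm with the matrix $1$-norm it induces, so that each application of $\|Ax\|_1\le\|A\|_1\|x\|_1$ is legitimate; and (ii) ensuring that $\mathcal{R}_{11}$ and $\mathcal{V}_1^T\mathcal{V}_1$ are invertible so that $\hat{\boldsymbol{a}}_1$, $\hat{\boldsymbol{a}}_2$ are indeed given by \eqref{a1eq}, \eqref{a2eq}. The latter is guaranteed by $\operatorname{rank}(\mathcal{C})=M$ and $\operatorname{rank}(\mathcal{M})=\tilde{R}$ (cf. \eqref{assump1}, \eqref{assump2}), which force $\mathcal{R}_{11}$ nonsingular and $\mathcal{V}_1$ of full column rank, exactly as in the direct–elimination construction.
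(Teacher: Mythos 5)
Your proposal is correct and follows essentially the same route as the paper's own proof: reduce the operator norm to $\lVert\hat{\boldsymbol{a}}\rVert_1$, split into $\lVert\hat{\boldsymbol{a}}_1\rVert_1+\lVert\hat{\boldsymbol{a}}_2\rVert_1$, and bound each piece via the direct-elimination formulas~\eqref{a1eq} and~\eqref{a2eq} together with $\lVert\boldsymbol{b}\rVert_1\le n^{\star}$ and $\lVert\boldsymbol{d}\rVert_1\le M$ under the normalization $\lVert f\rVert_\infty=1$. Your added remarks on the consistency of the induced $1$-norm and on the invertibility of $\mathcal{R}_{11}$ and $\mathcal{V}_1^T\mathcal{V}_1$ make explicit what the paper leaves implicit.
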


\begin{proof}
Let $f\in C\left(D_1,\mathbb{R}\right)$ be such that   
\begin{equation} \label{hyp}
\left\lVert f \right\rVert_{\infty}=\max_{\boldsymbol{x}\in D_1}\left\lvert f(\boldsymbol{x})\right\rvert=1.   
\end{equation}
For any $\boldsymbol{x}\in D_1$, we get
\begin{equation*}
    \left|\hat{\Pi}_{\tilde{r},n}[f](\boldsymbol{x})\right|= \left|\sum_{i=1}^{\tilde{R}} \hat{a}_iu_i(\boldsymbol{x})\right| \le \sum_{i=1}^{\tilde{R}}\left|\hat{a}_iu_i(\boldsymbol{x})\right|= \sum_{i=1}^{\tilde{R}} \left\lvert\hat{a}_i\right\rvert \left\lvert u_i(\boldsymbol{x})\right\rvert.
\end{equation*}
Thus we obtain
\begin{equation*}
\left\lVert\hat{\Pi}_{\tilde{r},n}[f]\right\rVert_{\infty}\le  \max_{i=1,\dots,\tilde{R}}\left\lVert u_i \right\rVert_{\infty} \sum_{i=1}^{\tilde{R}} \left|\hat{a}_i\right|=\max_{i=1,\dots,\tilde{R}}\left\lVert u_i \right\rVert_{\infty} \left\lVert\hat{\boldsymbol{a}}\right\rVert_1. 
\end{equation*}
To complete the proof, we bound the $L_1$-norm of $\hat{\boldsymbol{a}}=\hat{\boldsymbol{a}}(f)$. Writing 
\begin{eqnarray*}
\left\lVert
\hat{\boldsymbol{a}} 
\right\rVert_1
&=& 
\left\lVert
\begin{bmatrix}
\hat{\boldsymbol{a}}_1 \\
\hat{\boldsymbol{a}}_2
\end{bmatrix}
\right\rVert_1
= \lVert \hat{\boldsymbol{a}}_1 \rVert_1 + \lVert \hat{\boldsymbol{a}}_2 \rVert_1,
\end{eqnarray*}
and using~\eqref{a2eq} we obtain
\begin{equation}\label{a2}
   \left\lVert \hat{\boldsymbol{a}}_2\right\rVert_1\le\left\lVert\left(\mathcal{V}_1^{T}\mathcal{V}_1\right)^{-1}\mathcal{V}_1^{T}\right\rVert_1 \left\lVert \boldsymbol{b}_1\right\rVert_1.
\end{equation}
From~\eqref{B1}
\begin{equation*}
\left\|\boldsymbol{b}_1\right\|_1 \le \left \|\boldsymbol{b}\right\|_1 +\left\| \mathcal{M}_1 \mathcal{R}_{11}^{-1} \mathcal{Q}_{\mathcal{C}}^T\right\|_1 \left\|\boldsymbol{d}\right\|_1\le  n^{\star} + M\left\| \mathcal{M}_1 \mathcal{R}_{11}^{-1} \mathcal{Q}_{\mathcal{C}}^T\right\|_1.
\end{equation*}
Thus, by~\eqref{a2}, it holds
 \begin{equation}
 \label{aa2}
     \left\lVert \hat{\boldsymbol{a}}_2\right\rVert_1\le   \left\lVert(\mathcal{V}_1^{T}\mathcal{V}_1)^{-1}\mathcal{V}_1^{T}\right\rVert_1 \left( n^{\star} + M\left\|\mathcal{M}_1 \mathcal{R}_{11}^{-1} \mathcal{Q}_{\mathcal{C}}^T\right\|_1 \right).
 \end{equation}
Similarly, from~\eqref{a1eq} we have
 \begin{equation*}
    \left\|\hat{\boldsymbol{a}}_1\right\|_1 \le   \left\| \mathcal{R}_{11}^{-1}\right\|_1 \left(M\left\|\mathcal{Q}_\mathcal{C}^T\right\|_1 + \left\|R_{12}\right\|_1  \left\lVert \hat{\boldsymbol{a}}_2\right\rVert_1\right).
 \end{equation*}
Combining these estimates yields the stated upper bound.
\end{proof}

Since the operator~\eqref{operatorPhat} is a projection on $\mathbb{P}_{\tilde{r}}\left(\mathbb{R}^2\right)$~\cite[Ch. 6]{Cheney:1998:ITA}, standard estimates for the uniform norm of the approximation error
\begin{equation}
\label{eq:ApproximationError}
   \left\lVert f-\hat{\Pi}_{\tilde{r},n}[f]\right\rVert_{\infty}, \quad f\in C\left(D_1,\mathbb{R}\right),
\end{equation}
can be given in terms of the error of best uniform approximation by polynomials in $\mathbb{P}_{\tilde{r}}\left(\mathbb{R}^2\right)$,
\begin{equation}\label{errorbestunifapp}
    \mathcal{E}_{\tilde{r}}(f):=\left\lVert f-p^{\star}_{\tilde{r}}[f]\right\rVert_{\infty},
\end{equation}
also known as the minimax error. With this in mind, the following theorem holds.
\begin{theorem}
\label{Thm:Errorb}
Let $f\in C\left(D_1,\mathbb{R}\right)$, then 
\begin{equation}   \label{bounderroruniformapprox}
    \left\lVert f-\hat{\Pi}_{\tilde{r},n}[f]\right\rVert_{\infty}\le \left(1+\left\lVert u_i \right\rVert_{\infty} \left(K_1(n,m)+K_2(n,m)\right)\right)\mathcal{E}_{\tilde{r}}(f).
\end{equation}
\end{theorem}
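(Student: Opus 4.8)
The plan is to exploit the fact that $\hat{\Pi}_{\tilde{r},n}$ is a linear projection onto $\mathbb{P}_{\tilde{r}}(\mathbb{R}^2)$, which is exactly the structure that lets one pass from the operator norm bound of Theorem~\ref{Thm:EstimateOfNorm} to an error bound in terms of the minimax error $\mathcal{E}_{\tilde{r}}(f)$. This is the standard Lebesgue-type argument. Let $p^{\star}_{\tilde{r}}[f]\in\mathbb{P}_{\tilde{r}}(\mathbb{R}^2)$ be the polynomial of best uniform approximation to $f$, so that $\left\lVert f-p^{\star}_{\tilde{r}}[f]\right\rVert_{\infty}=\mathcal{E}_{\tilde{r}}(f)$. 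Since $\hat{\Pi}_{\tilde{r},n}$ reproduces polynomials of degree $\le\tilde{r}$ (one of the listed properties of the operator), we have $\hat{\Pi}_{\tilde{r},n}[p^{\star}_{\tilde{r}}[f]]=p^{\star}_{\tilde{r}}[f]$.

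The key steps, in order, are the following. First, write the error as a telescoping sum by inserting $\pm\, p^{\star}_{\tilde{r}}[f]$ and $\pm\,\hat{\Pi}_{\tilde{r},n}[f]$:
\begin{equation*}
f-\hat{\Pi}_{\tilde{r},n}[f] = \left(f-p^{\star}_{\tilde{r}}[f]\right) + \left(p^{\star}_{\tilde{r}}[f]-\hat{\Pi}_{\tilde{r},n}[f]\right).
\end{equation*}
Second, using polynomial reproduction and linearity, rewrite the second summand as $p^{\star}_{\tilde{r}}[f]-\hat{\Pi}_{\tilde{r},n}[f]=\hat{\Pi}_{\tilde{r},n}[p^{\star}_{\tilde{r}}[f]]-\hat{\Pi}_{\tilde{r},n}[f]=\hat{\Pi}_{\tilde{r},n}[p^{\star}_{\tilde{r}}[f]-f]$. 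Third, apply the triangle inequality for $\left\lVert\cdot\right\rVert_\infty$ and the definition of the operator norm to get
\begin{equation*}
\left\lVert f-\hat{\Pi}_{\tilde{r},n}[f]\right\rVert_{\infty}\le \left\lVert f-p^{\star}_{\tilde{r}}[f]\right\rVert_{\infty} + \left\lVert\hat{\Pi}_{\tilde{r},n}\right\rVert_{\infty}\left\lVert p^{\star}_{\tilde{r}}[f]-f\right\rVert_{\infty} = \left(1+\left\lVert\hat{\Pi}_{\tilde{r},n}\right\rVert_{\infty}\right)\mathcal{E}_{\tilde{r}}(f).
\end{equation*}
Fourth, substitute the bound on $\left\lVert\hat{\Pi}_{\tilde{r},n}\right\rVert_{\infty}$ from Theorem~\ref{Thm:EstimateOfNorm}, namely $\left\lVert\hat{\Pi}_{\tilde{r},n}\right\rVert_{\infty}\le \max_{i}\left\lVert u_i\right\rVert_{\infty}\left(K_1(n,m)+K_2(n,m)\right)$, to obtain~\eqref{bounderroruniformapprox}. (Strictly, the statement writes $\left\lVert u_i\right\rVert_\infty$ rather than $\max_i\left\lVert u_i\right\rVert_\infty$, which is a mild abuse of notation inherited from the bound above; I would keep the $\max$ for clarity, or note the convention.)

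There is essentially no obstacle here: the whole argument is the classical projection/Lebesgue inequality, and every ingredient — linearity, polynomial reproduction, the operator-norm estimate — has already been established in the excerpt. The only point requiring the tiniest care is making sure $p^{\star}_{\tilde{r}}[f]$ exists, which it does because $\mathbb{P}_{\tilde{r}}(\mathbb{R}^2)$ is a finite-dimensional subspace of $C(D_1,\mathbb{R})$ and best approximation from a finite-dimensional subspace in a normed space always exists; this is also implicit in the definition~\eqref{errorbestunifapp}. So the proof is short and I would present it in full rather than sketch it.
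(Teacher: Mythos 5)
Your proof is correct and follows essentially the same route as the paper's: insert $\pm\,p^{\star}_{\tilde{r}}[f]$, use linearity and polynomial reproduction to write the second term as $\hat{\Pi}_{\tilde{r},n}\left[p^{\star}_{\tilde{r}}[f]-f\right]$, apply the triangle inequality and the operator-norm bound of Theorem~\ref{Thm:EstimateOfNorm}. Your parenthetical remark about $\max_i\left\lVert u_i\right\rVert_{\infty}$ versus $\left\lVert u_i\right\rVert_{\infty}$ is a fair observation on the notation in the stated bound.
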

\begin{proof}
Let $p^{\star}_{\tilde{r}}[f]\in \mathbb{P}_{\tilde{r}}\left(\mathbb{R}^2\right)$ be the polynomial of best uniform approximation to $f$. Then,
\begin{eqnarray*}
    \left\lVert f-\hat{\Pi}_{\tilde{r},n}[f]\right\rVert_{\infty}&=&    \left\lVert f-p_{\tilde{r}}^{\star}[f]+p_{\tilde{r}}^{\star}[f]-\hat{\Pi}_{\tilde{r},n}[f]\right\rVert_{\infty}\\
    &=&\left\lVert f-p_{\tilde{r}}^{\star}[f]-\hat{\Pi}_{\tilde{r},n}\left[f-p_{\tilde{r}}^{\star}[f]\right]\right\rVert_{\infty}\\
    &\le& \left(1+\left\lVert \hat{\Pi}_{\tilde{r},n}\right\rVert_{\infty}\right)\left\lVert f-p_{\tilde{r}}^{\star}[f]\right\rVert_{\infty}.
\end{eqnarray*}
Then the statement is a direct consequence of Theorem~\ref{Thm:EstimateOfNorm}.
\end{proof}

\begin{remark}
The error bound in~\eqref{bounderroruniformapprox} strongly depends on the choice of $m$ and $\tilde{r}$ and on the set $S_{n,n}$. Varying these parameters will yield corresponding changes in the upper bound.
\end{remark}

In the following, we present two different examples of sets of sampling points.
\begin{example}
      We consider the set of sampling points
   \begin{equation}\label{pointsetadm}
    \tilde{S}_{n,n}^1=\left\{\boldsymbol{x}_{\eta ,\kappa}=\left(r_\eta\cos \left(\theta_\kappa\right),r_\eta\sin\left(\theta_\kappa\right)\right)\, :\, \eta=0,\dots,n, \, \kappa=0,\dots,n\right\}\cup \{\boldsymbol{0}=(0,0)\}\subset D_1,
\end{equation}
such that
\begin{equation*}
    r_{\eta}=\frac{\eta+1}{{n+1}}, \quad \theta_{\kappa}=\frac{2\pi}{{n+1}}\kappa, \quad \eta,\kappa=0,\dots,n.
\end{equation*}
In this case, we have
\begin{equation*}
    \#\left(\tilde{S}_{n,n}^1\right)=(n+1)^2+1.
\end{equation*}
Let consider  
\begin{equation*}
    m=\left\lfloor \frac{n}{4} \right\rfloor
\end{equation*}
and compute the set of mock-optimal nodes for different values of $n=20,40,60,80$, see Fig.~\ref{figrec2}. Moreover, in Fig.~\ref{figrec3} we show the log-log plot of the bound~\eqref{eq:BoundThm} for the norm of the operator $\hat{\Pi}_{\tilde{r},n}$ with
\begin{equation*}
\tilde{r}=m+\left\lfloor\sqrt{m} \right\rfloor
\end{equation*}
using the Zernike polynomial basis. The graph suggests that the error bound satisfies
     $$K_1(n,m)+K_2(n,m) \approx e^{2.49 \log n}=n^{2.49}.$$
     Moreover, we analyzed the trend of the condition number of the KKT matrix computed for the set $\tilde{S}_{n,n}^{1}$; see Fig.~\ref{condKKT} (left).
\end{example}

\begin{figure}
  \centering
\includegraphics[width=0.49\textwidth]{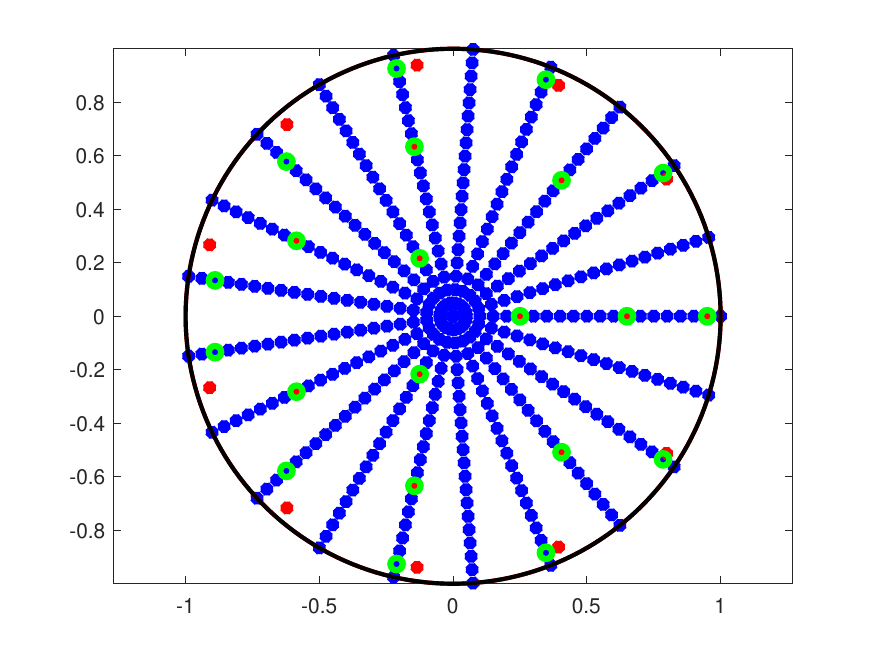} 
\includegraphics[width=0.49\textwidth]{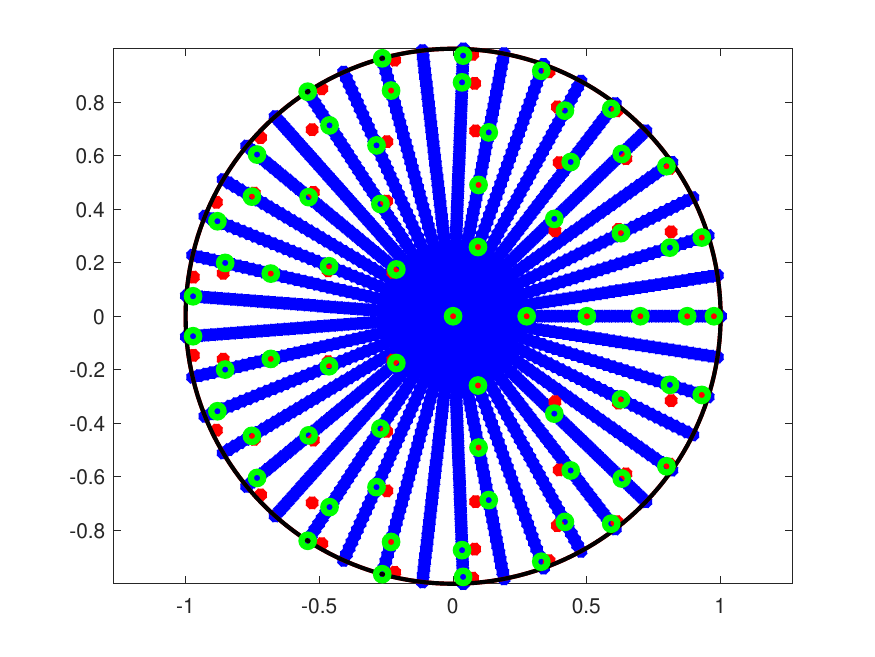} 
\includegraphics[width=0.49\textwidth]{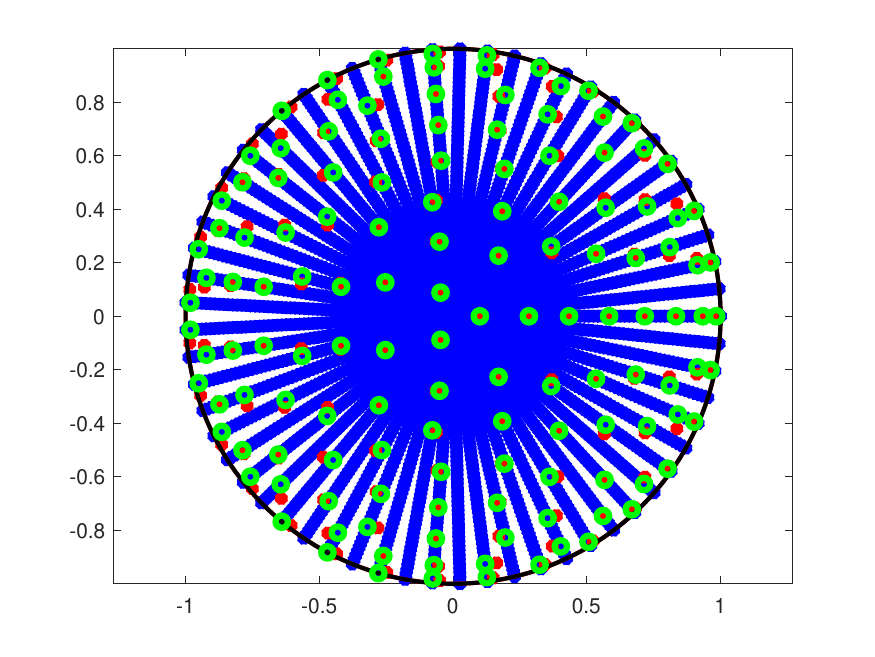}
\includegraphics[width=0.49\textwidth]{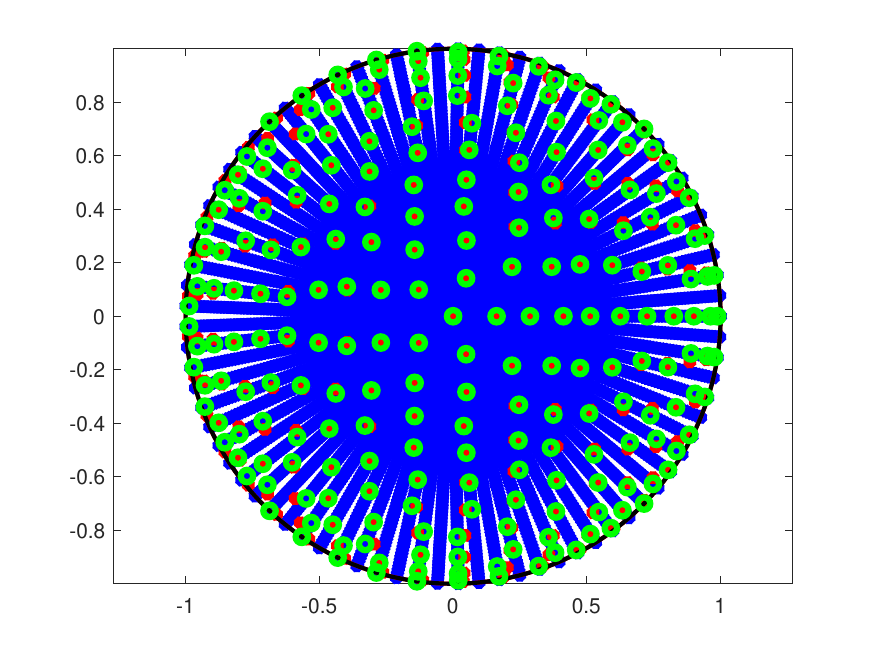}
 \caption{Plot of the sampling nodes of  $\tilde{S}_{n,n}^1$ (\textcolor{blue}{$\star$}), optimal points (\textcolor{red}{$\star$}) and mock-optimal nodes (\textcolor{green}{$\circ$}) for $n=20,40,60,80$, with  $  m=\left\lfloor \frac{n}{4} \right\rfloor$.}
 \label{figrec2}
\end{figure}

\begin{figure}
  \centering
\includegraphics[width=0.49\textwidth]{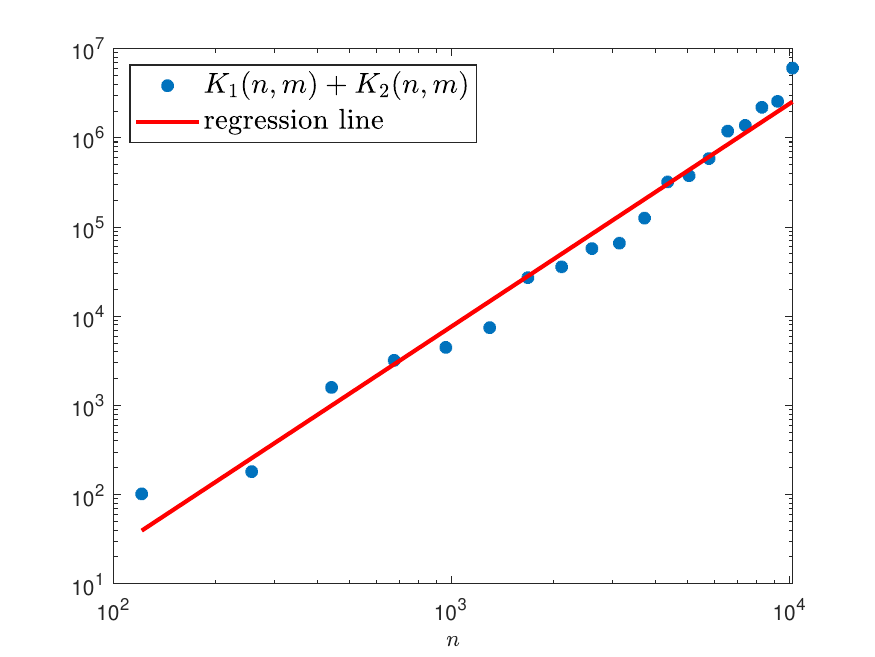} 
 \caption{Trend of the bound for the norm of $\hat{\Pi}_{\tilde{r},n}$ with $  m=\left\lfloor \frac{n}{4} \right\rfloor$, $\tilde{r}=m+\left\lfloor\sqrt{m} \right\rfloor$ varying $n$ from $n=10$ to $n=100$.}
 \label{figrec3}
\end{figure}

\begin{example}
We consider the set of sampling points on the unit disk generated by a spiral arrangement. Let $n^{\star}\in\mathbb{N}$ be the total number of points. For each index
\begin{equation*}
    \iota = 0, 1, \dots, n^{\star}-1,
\end{equation*}
we define the radial and angular coordinates as
\begin{equation*}
    r_{\iota} = \sqrt{\frac{\iota}{n^{\star}}},\quad \theta_{\iota} =\iota\theta_0,
\end{equation*}
where
\begin{equation*}
\theta_0 = \pi\left(3-\sqrt{5}\right) 
\end{equation*}
is the golden angle, which helps to avoid alignment and yields a quasi-uniform distribution in area.
Thus, the set of sampling points is defined as
\begin{equation}\label{spiralnodes}
S^{\mathrm{spiral}}_{n^{\star}} = \left\{\boldsymbol{x}_{\iota}= \left(r_\iota\cos\left(\theta_\iota\right),r_\iota\sin\left(\theta_\iota\right)\right) \, : \, \iota=0,\dots,n^{\star}-1 \right\} \subset D_1.    
\end{equation}
We set $n^{\star}=n^2$ and
\begin{equation*}
    m=\left\lfloor \frac{n}{4} \right\rfloor
\end{equation*}
and we compute the set of mock-optimal nodes for different values of $n=20,40,60,80$, see Fig~\ref{figrec2spi}. Moreover, we analyzed the trend of the condition number of the KKT matrix computed for the set $S_{n^2}^{\mathrm{spiral}}$; see Fig.~\ref{condKKT} (right).
\end{example}

\begin{figure}
  \centering
\includegraphics[width=0.49\textwidth]{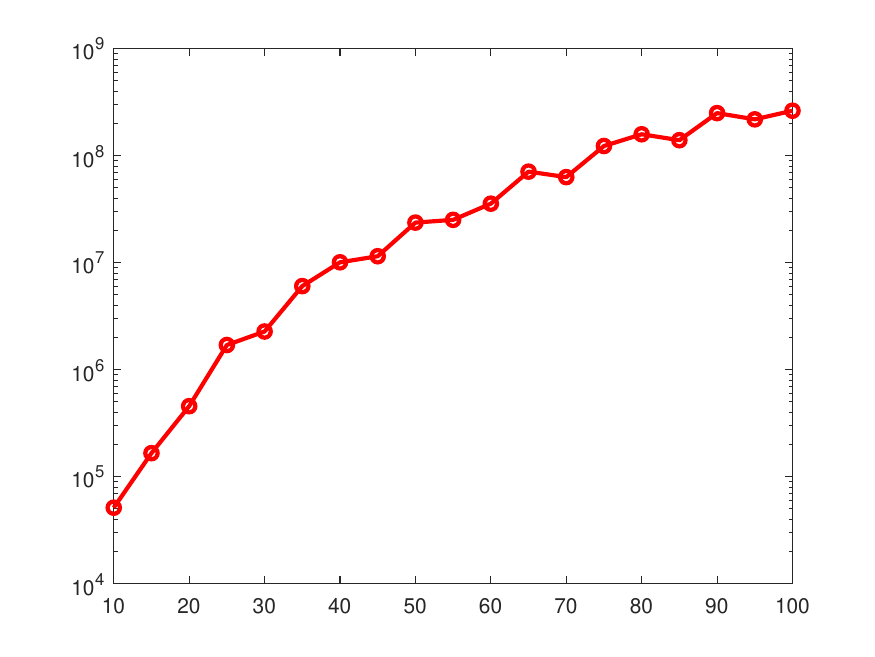}
\includegraphics[width=0.49\textwidth]{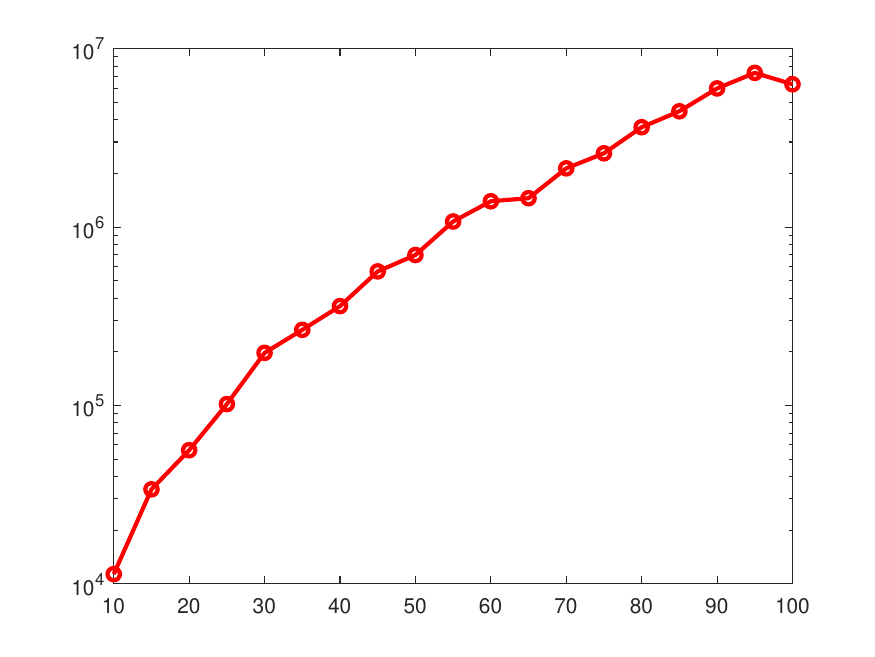}
 \caption{Plot of the condition number of the KKT matrix computed for the set $\tilde{S}^1_{n,n}$ (left) and for the set $S_{n^2}^{\mathrm{spiral}}$ (right), with $n$ varying from $n=10$ to $n=100$, $m = \left\lfloor \frac{n}{4} \right\rfloor$, and $\tilde{r} = m + \left\lfloor \sqrt{m} \right\rfloor$.}
 \label{condKKT}
\end{figure}

\begin{figure}
  \centering
\includegraphics[width=0.49\textwidth]{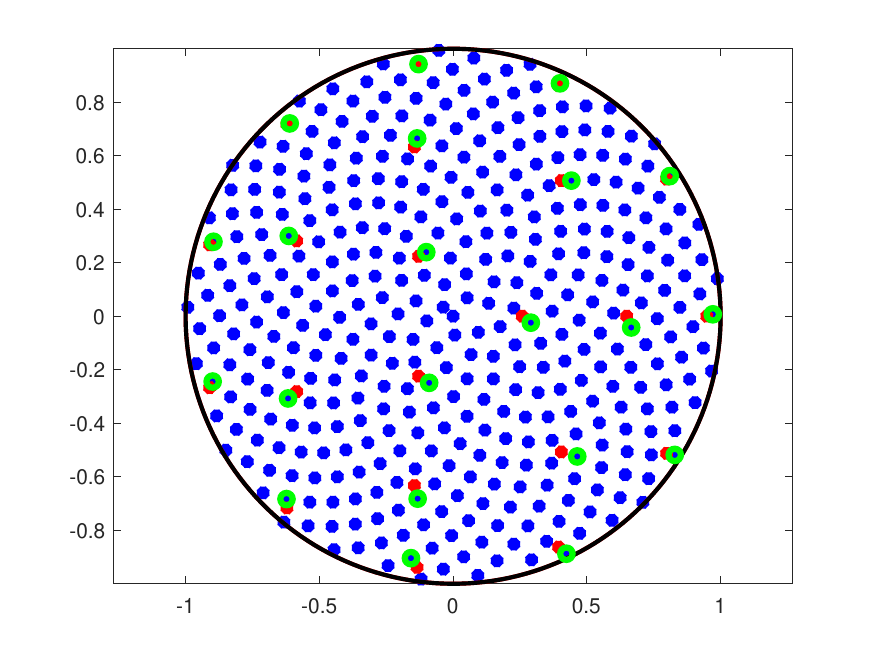} 
\includegraphics[width=0.49\textwidth]{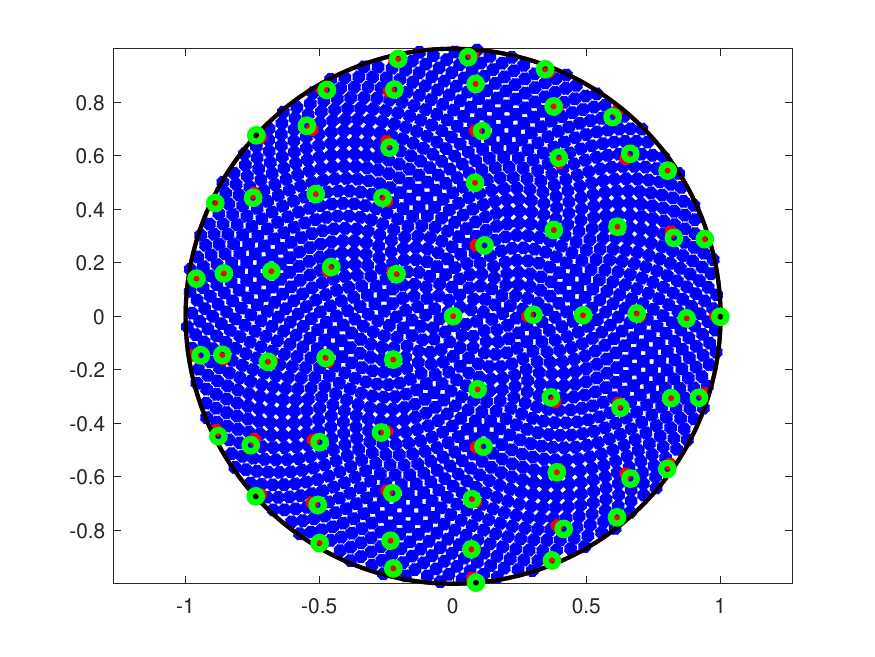} 
\includegraphics[width=0.49\textwidth]{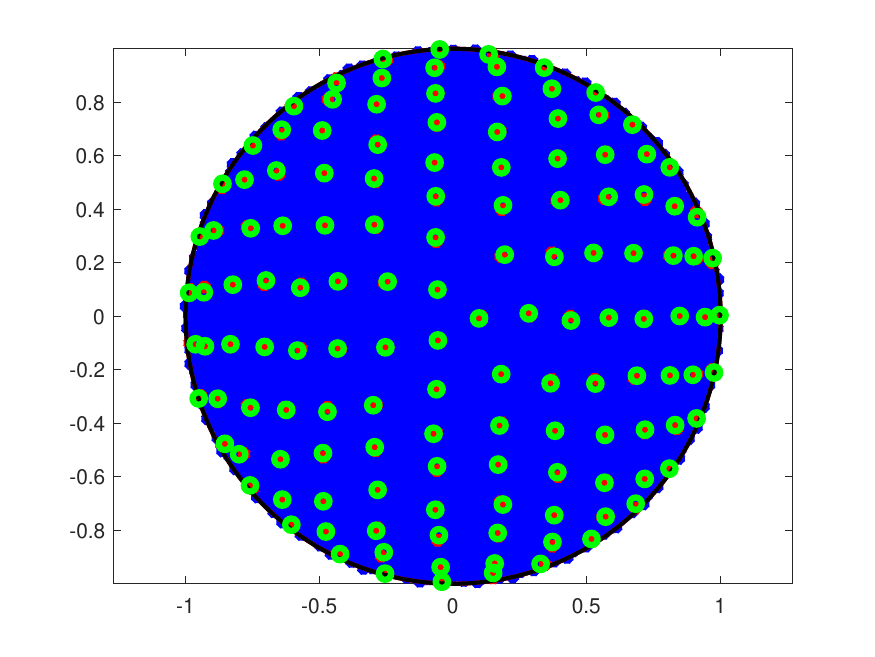}
\includegraphics[width=0.49\textwidth]{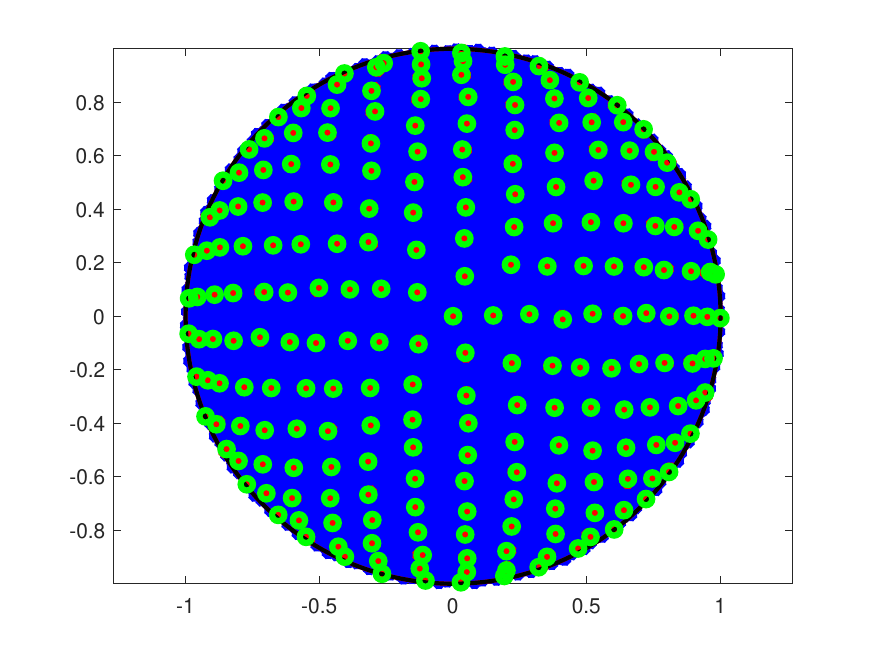}
 \caption{Plot of the sampling nodes of $S^{\mathrm{spiral}}_{n^{\star}}$ (\textcolor{blue}{$\star$}), optimal points (\textcolor{red}{$\star$}) and mock-optimal nodes (\textcolor{green}{$\circ$}) for $n^{\star}=n^2$, $n=20,40,60,80$ with  $  m=\left\lfloor \frac{n}{4} \right\rfloor$.}
 \label{figrec2spi}
\end{figure}

\section{Cubature formula on the disk through an interpolation-regression method}
\label{SecCubFor}
Let $f \in C\left(D_1,\mathbb{R}\right)$ be a continuous function defined on $D_1$, and let $\omega \in L_1\left(D_1\right)$ be a weight function on the disk. Our goal is to introduce a cubature formula to approximate the integral
\begin{equation}\label{inte}
I[f]:=\int_{D_1} f(\boldsymbol{x})\,\omega(\boldsymbol{x})\,d\boldsymbol{x},
\end{equation}
under the assumption that evaluations of $f$ are available only at a prescribed set of nodes $S_{n,k} \subset D_1$. As above, for simplicity, we set $k=n$.

\noindent
If $f$ were known at all points of $D_1$, then a natural idea would be to employ a Gaussian cubature formula on the disk~\cite{kim1997symmetric, bojanov1998numerical, bojanov2000uniqueness}. Unfortunately, in many applications the exact evaluations of $f$ at these points are not available. For this reason, we propose a cubature method based on the interpolation-regression operator defined in~\eqref{operatorPhat}.

\noindent
To implement this strategy, we follow an approach similar to that used in \cite{majidian2016creating, DellAccio:2022:CMC, DellAccio:2022:AAA, dell2024quadrature} for constructing cubature formulas on square and triangular domains. More precisely, let $n, m, \tilde{r} \in \mathbb{N}$ be such that
\begin{equation*}
    \tilde{r}>m, \quad \tilde{R}=\frac{\left(\tilde{r}+1\right)\left(\tilde{r}+2\right)}{2}<n^{\star}=\#\left(S_{n,n}\right).
\end{equation*}
We consider a set of Gaussian cubature nodes and weights on $D_1$
\begin{equation*}
    \{\boldsymbol{\xi}_1,\dots, \boldsymbol{\xi}_P\} \quad\text{and}\quad \{\omega_1,\dots, \omega_P\}, \quad P\in \mathbb{N}.
\end{equation*}
Then, the integral in~\eqref{inte} is approximated by replacing $f$ with its approximation $\hat{\Pi}_{\tilde{r},n}[f]$, that is
\begin{equation}\label{formulaquad}
\int_{D_1} f(\boldsymbol{x})\,\omega(\boldsymbol{x})d\boldsymbol{x} \approx \int_{D_1} \hat{\Pi}_{\tilde{r},n}[f](\boldsymbol{x})\omega(\boldsymbol{x})d\boldsymbol{x} \approx \sum_{j=1}^P\omega_j\hat{\Pi}_{\tilde{r},n}[f]\left(\boldsymbol{\xi}_j\right),
\end{equation}
where the cubature formula
\begin{equation}\label{quadFor}
\hat{Q}_{\tilde{r},n,P}[f] := \sum_{j=1}^P\omega_j\,\hat{\Pi}_{\tilde{r},n}[f]\left(\boldsymbol{\xi}_j\right)
\end{equation}
approximates the integral.

\section{Numerical experiments}
\label{SecNumEx}
In this section, we perform several numerical experiments to demonstrate the effectiveness of the proposed method. Throughout the experiments, the polynomial approximations are represented using the Zernike polynomial basis. \\

\noindent 
We consider the following functions
\begin{equation*}
    f_1(x,y)=e^{-(x^2 + y^2)}, \quad f_2(x,y)=\sin (xy), \quad f_3(x,y)=e^{-xy}, \quad 
\end{equation*}
\begin{equation*}
f_4(x,y)=\frac{1}{x^2+y^2+1}, \quad 
    f_5(x,y)=\frac{1}{4x^2+4y^2+1}, \quad f_6(x,y)=\log (x^2+y^2+1).
\end{equation*}
\subsection{Function approximations}
For the first tests, we use the set of sampling nodes $\tilde{S}_{n,n}^1$, defined in~\eqref{pointsetadm}, and fix
\begin{equation*}
    m=\left\lfloor \frac{n}{4} \right \rfloor, \quad \tilde{r}=m+\left\lfloor\sqrt{m}\right\rfloor.
\end{equation*}
For each test function $f_i$, $i=1,\dots,6$, we compare the behaviour of the maximum approximation errors 
\begin{equation*}
e_{m}^{\mathrm{opt}}\left(f_i\right), \quad  e_{m}^{\prime}\left(f_i\right), \quad \hat{e}_{\tilde{r},n}\left(f_i\right)   
\end{equation*}
obtained by approximating each $f_i$ using
\begin{itemize}
    \item[-]  the polynomial interpolation on the set of optimal nodes $\Sigma_m^{\mathrm{opt}}$,
    \item[-] the polynomial interpolation on the set of mock-optimal nodes $\Sigma_m^{\prime}$,
    \item[-] the polynomial approximation $\hat{\Pi}_{\tilde{r},n}\left[f_i\right]$, defined in~\eqref{operatorPhat},
\end{itemize}
respectively.
This comparison is performed by increasing $n$ from $n=10$ to $n=100$. The results are shown in Fig.~\ref{ex1} and Fig.~\ref{ex2}. From these plots, it is clear that the error trends are similar. More precisely, as $n$ increases, the errors decrease until they reach machine precision, after which they remain constant. In particular, the interpolation error computed on the optimal nodes exhibits the same behavior as that obtained using the mock-optimal nodes. Moreover, the addition of simultaneous regression on the remaining nodes further improves the approximation accuracy.\\

\begin{figure}
  \centering
\includegraphics[width=0.32\textwidth]{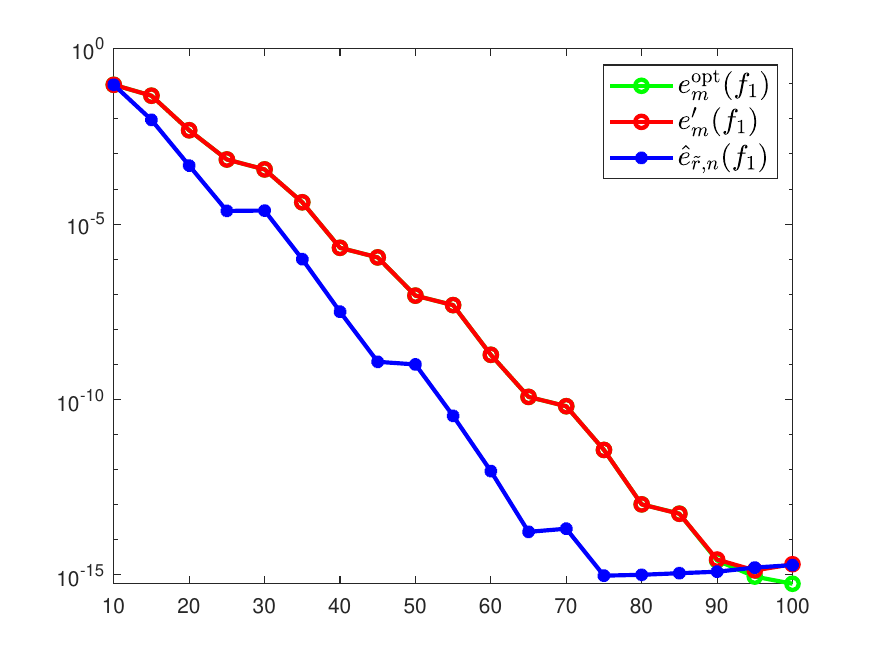} 
\includegraphics[width=0.32\textwidth]{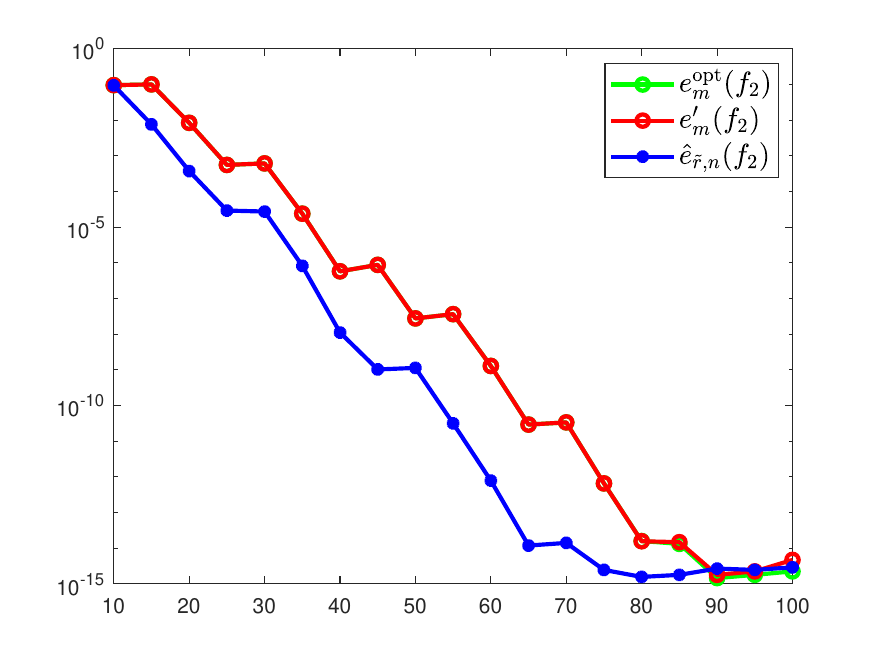} 
\includegraphics[width=0.32\textwidth]{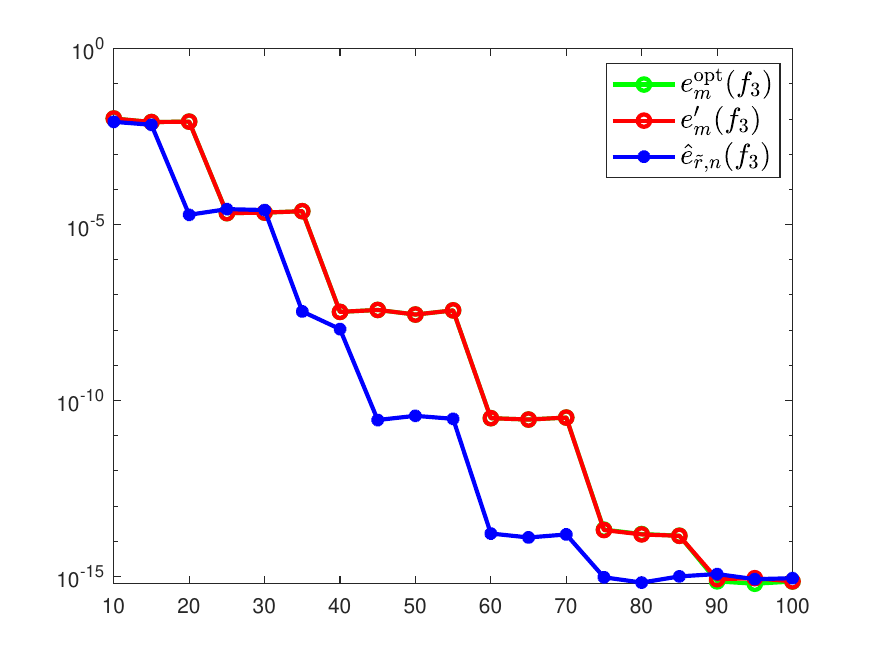}
 \caption{Comparison between the maximum approximation errors for the functions $f_1$ (left), $f_2$ (center), $f_3$ (right) produced by the polynomial interpolation on the set of optimal nodes $\Sigma_m^{\mathrm{opt}}$ ($e_{m}^{\mathrm{opt}}\left(f_i\right)$), the polynomial interpolation on the set of mock-optimal nodes $\Sigma_m^{\prime}$ ($e_{m}^{\prime}\left(f_i\right)$), and by $\hat{\Pi}_{\tilde{r},n}\left[f_i\right]$ ($\hat{e}_{\tilde{r},n}\left(f_i\right)$) as $n$ increases from $n=10$ to $n=100$.}
 \label{ex1}
\end{figure}

\begin{figure}
  \centering
\includegraphics[width=0.32\textwidth]{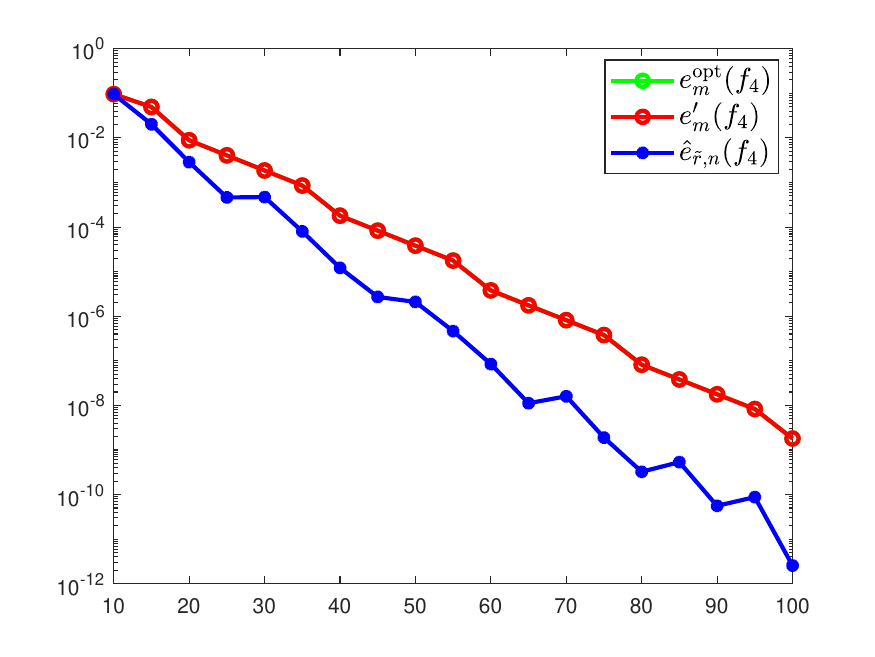}
\includegraphics[width=0.32\textwidth]{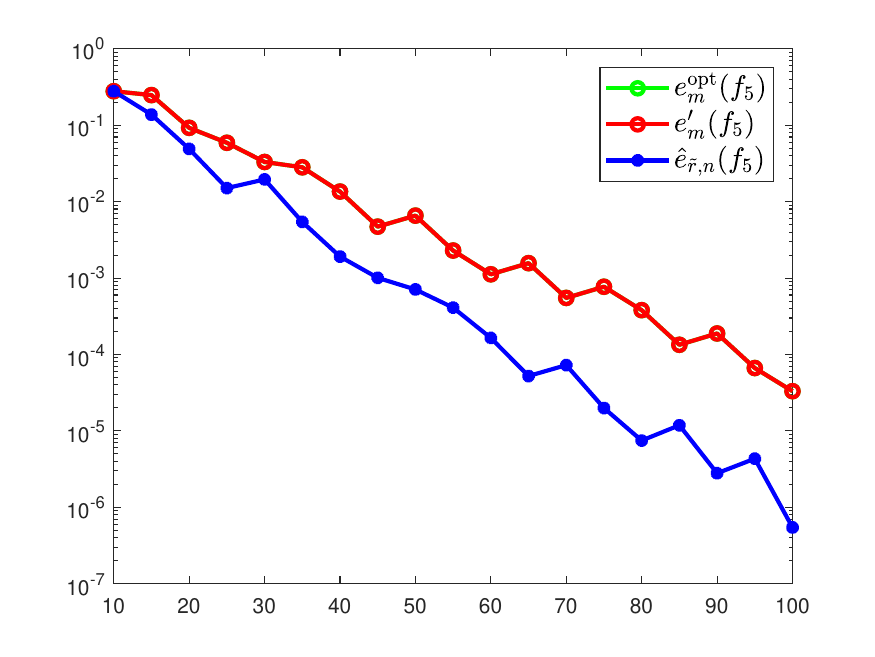}
\includegraphics[width=0.32\textwidth]{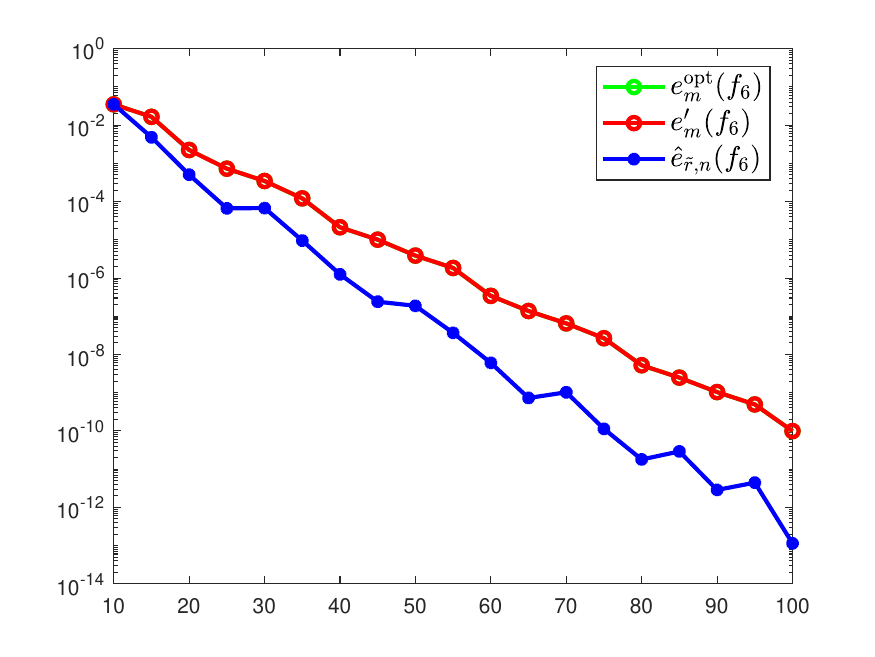}
 \caption{Comparison between the maximum approximation errors for the functions $f_4$ (left), $f_5$ (center), $f_6$ (right) produced by the polynomial interpolation on the set of optimal nodes $\Sigma_m^{\mathrm{opt}}$ ($e_{m}^{\mathrm{opt}}\left(f_i\right)$), the polynomial interpolation on the set of mock-optimal nodes $\Sigma_m^{\prime}$ ($e_{m}^{\prime}\left(f_i\right)$), and by $\hat{\Pi}_{\tilde{r},n}\left[f_i\right]$ ($\hat{e}_{\tilde{r},n}\left(f_i\right)$) as $n$ increases from $n=10$ to $n=100$.}
 \label{ex2}
\end{figure}

\noindent
Since the accuracy of the approximation produced by $\hat{\Pi}_{\tilde{r},n}$ depends not only on the set of nodes but also on the values of $m$ and $\tilde{r}$, in the second set of numerical experiments we compute the errors $\hat{e}_{\tilde{r},n}\left(f_i\right)$, $i = 1,\dots,6$, while varying the values of $m$ and $\tilde{r}$ with respect to the two sets of points 
\begin{equation*} \tilde{S}_{100,100}^1 \quad \text{and} \quad S^{\mathrm{spiral}}_{10000}, \end{equation*} 
defined in~\eqref{pointsetadm} and~\eqref{spiralnodes}, respectively. The results are shown in Tables~\ref{tab:gaussmax} and~\ref{tab:gaussmax1}. From these tables, we observe that the values produced with respect to the two different sets are comparable, and that increasing the values of $m$ and $\tilde{r}$ can significantly improve the quality of the approximation.
\begin{table}[ht!]
    \centering
    \begin{tabular}{|c|c|c|c|c|c|c|}\hline
        & $(m,\tilde{r})=(5,10)$ & $(m,\tilde{r})=(10,15)$ & $(m,\tilde{r})=(15,20)$& $(m,\tilde{r})=(20,25)$ & $(m,\tilde{r})=(25,30)$ \\ \hline
        $f_1$ &  1.6031e-06 	 & 1.3714e-09 	& 7.1942e-14 & 6.0840e-14 	 & 6.1118e-14   \\
        $f_2$ &  6.7861e-07 & 3.2387e-10 	 &  9.4369e-15 	 & 2.4425e-15 &  4.4409e-15  \\
        $f_3$ & 2.8639e-08 	 & 2.1369e-11 	 &  9.1593e-15 	 & 7.7716e-16 &   7.2164e-16   \\
        $f_4$ &  1.4543e-04 	 & 3.0322e-06 	 &  1.2572e-08 	 &   3.1671e-10 & 2.5564e-12 \\
         $f_5$ &   1.1938e-02 	 &  1.1529e-03 	 & 5.3044e-05 	 & 7.3282e-06 &  5.4555e-07   \\
          $f_6$ & 1.7005e-05 	 & 2.6903e-07 	 & 8.0837e-10 	 & 1.7393e-11 & 1.1391e-13    \\\hline
    \end{tabular}
    \caption{Maximum approximation errors $\hat{e}_{\tilde{r},100}\left(f_i\right)$, $i=1,\dots,6$ computed on the set $\tilde{S}_{100,100}^1$ for different values of $m$ and $\tilde{r}$.}
    \label{tab:gaussmax}
\end{table}
\begin{table}[ht!]
    \centering
    \begin{tabular}{|c|c|c|c|c|c|c|}\hline
         & $(m,\tilde{r})=(5,10)$ & $(m,\tilde{r})=(10,15)$ & $(m,\tilde{r})=(15,20)$& $(m,\tilde{r})=(20,25)$ & $(m,\tilde{r})=(25,30)$ \\ \hline
        $f_1$ &  1.4077e-06 	 & 1.4599e-09 	&  2.1649e-14 &  2.1094e-15 	 & 1.9429e-15   \\
        $f_2$ &   6.9109e-07 & 2.9659e-10 	 &  9.9920e-15 	 & 2.4425e-14 &   2.3093e-14  \\
        $f_3$ &  3.0483e-08 	 &  2.1060e-11 	 &   1.1990e-14 	 &  2.7478e-14 &   2.8866e-14   \\
        $f_4$ &  1.3779e-04 	 & 2.9242e-06 	 &  1.3267e-08	 &   4.5378e-10 & 3.1434e-12 \\
         $f_5$ &   1.2586e-02 	 &  1.0596e-03 	 & 6.2751e-05 	 &  7.5731e-06 &  8.2208e-07   \\
          $f_6$ &  1.5903e-05 	 &  2.6311e-07 	 &  8.6596e-10	 & 2.4996e-11 &    1.3810e-13    \\\hline
    \end{tabular}
    \caption{Maximum approximation errors $\hat{e}_{\tilde{r},100}\left(f_i\right)$, $i=1,\dots,6$ computed on the set ${S}^{\text{spiral}}_{10000}$ for different values of $m$ and $\tilde{r}$.}
    \label{tab:gaussmax1}
\end{table}
\noindent
However, increasing the value of $m$ does not necessarily lead to a better approximation. Specifically, by fixing $n = 100$, we analyze the behavior of the approximation errors $\hat{e}_{\tilde{r},n}\left(f_i\right)$, $i = 1, \dots, 6$, computed on the set $\tilde{S}_{100,100}^1$ as $m$ varies from $m=10$ to $m=70$, with $\tilde{r} = m + \left\lfloor \sqrt{m} \right\rfloor$. The results of this experiment are shown in Fig.~\ref{ex1m} and Fig.~\ref{ex2m}. These plots clearly show that the error decreases only up to a certain value of $m$, beyond which it increases significantly. This behavior is due to the condition number of the corresponding KKT matrix, as illustrated in Fig.~\ref{cond}.

\begin{figure}
  \centering
\includegraphics[width=0.32\textwidth]{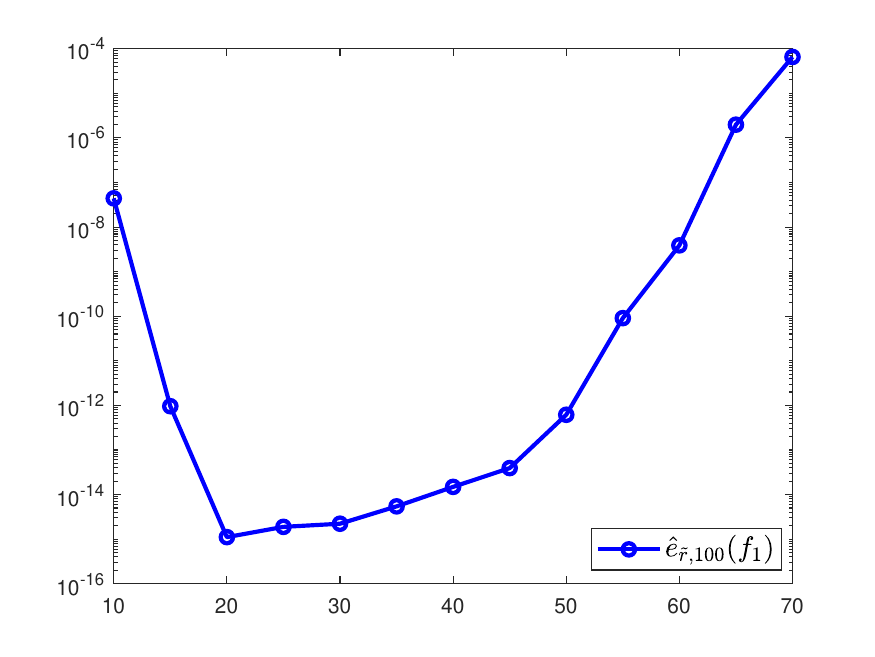} 
\includegraphics[width=0.32\textwidth]{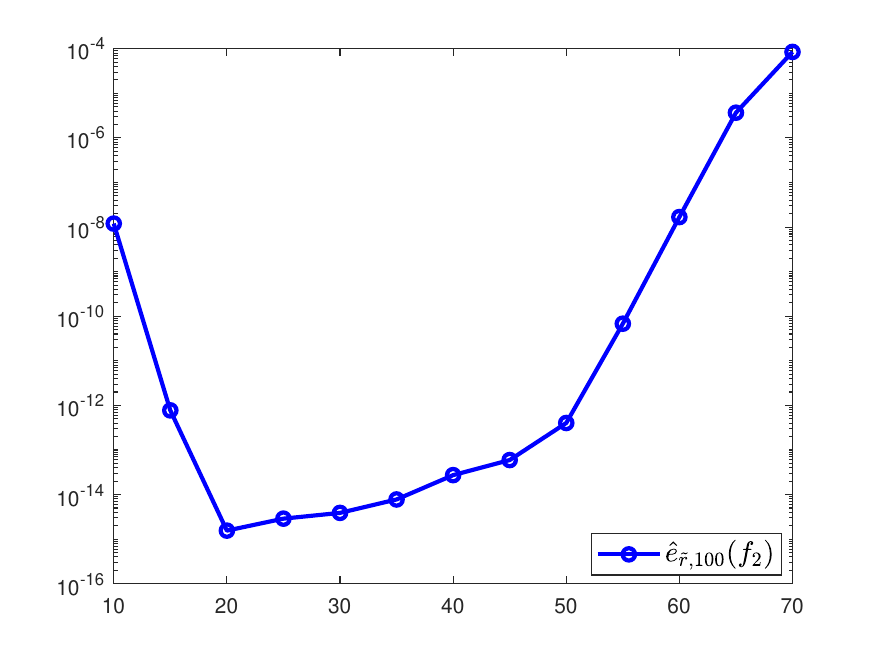} 
\includegraphics[width=0.32\textwidth]{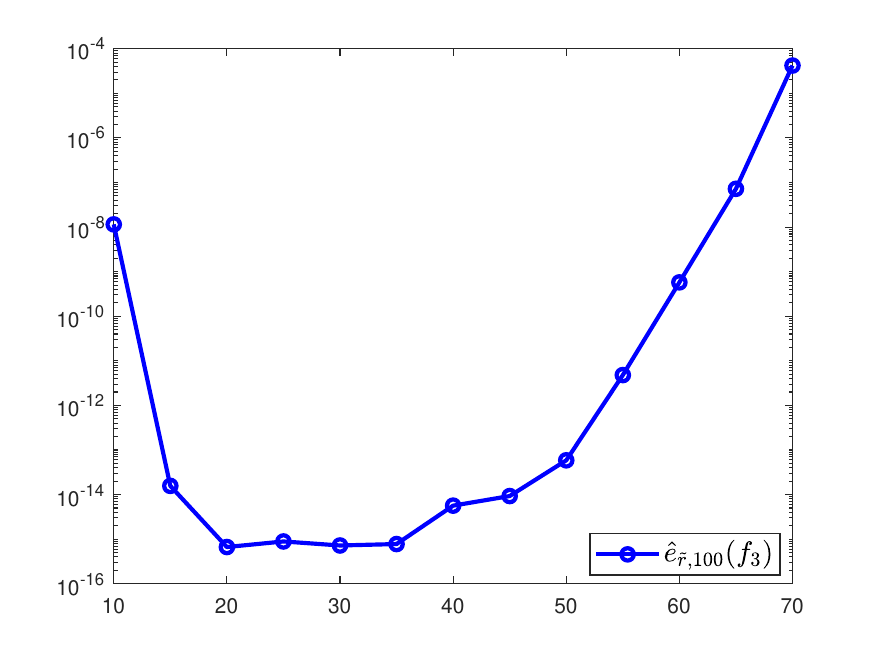}
 \caption{Trend of the maximum approximation errors for the functions $f_1$ (left), $f_2$ (center), $f_3$ (right) produced by  $\hat{\Pi}_{\tilde{r},100}\left[f_i\right]$ ($\hat{e}_{\tilde{r},100}\left(f_i\right)$) computed on the set $\tilde{S}_{100,100}^1$, with $m$ varying from $m=10$ to $m=70$, and $\tilde{r} = m + \left\lfloor \sqrt{m} \right\rfloor$.}
 \label{ex1m}
\end{figure}

\begin{figure}
  \centering
\includegraphics[width=0.32\textwidth]{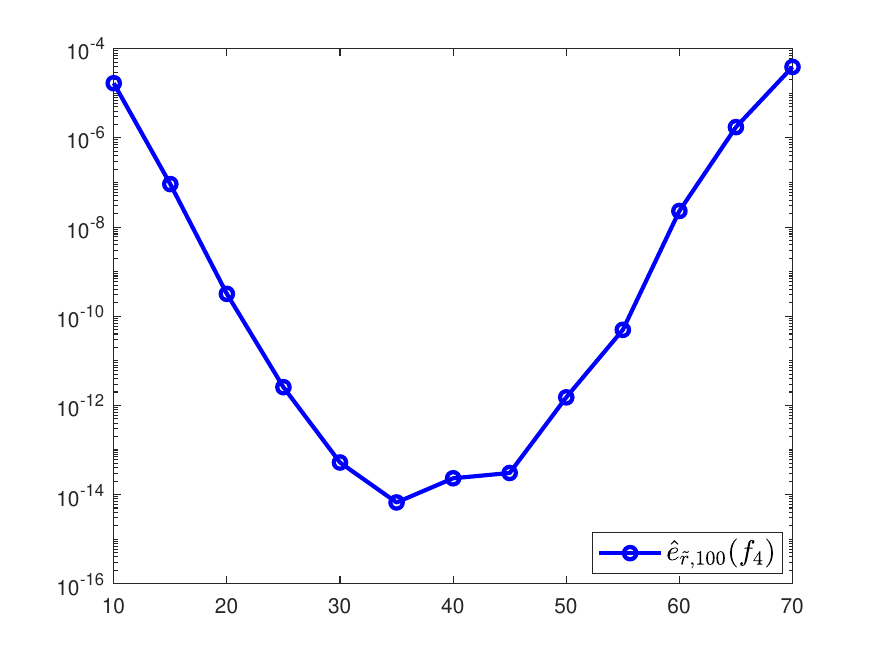}
\includegraphics[width=0.32\textwidth]{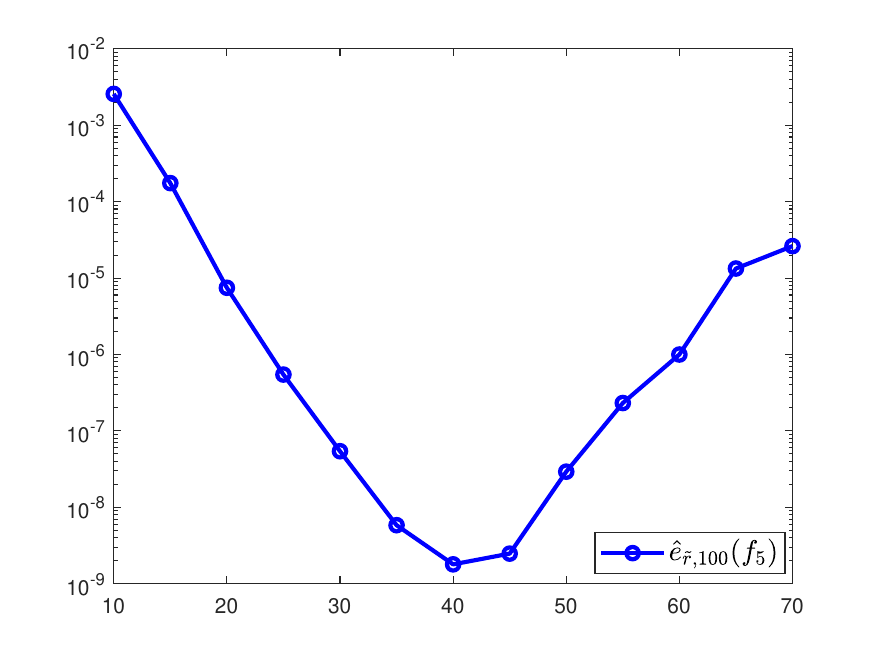}
\includegraphics[width=0.32\textwidth]{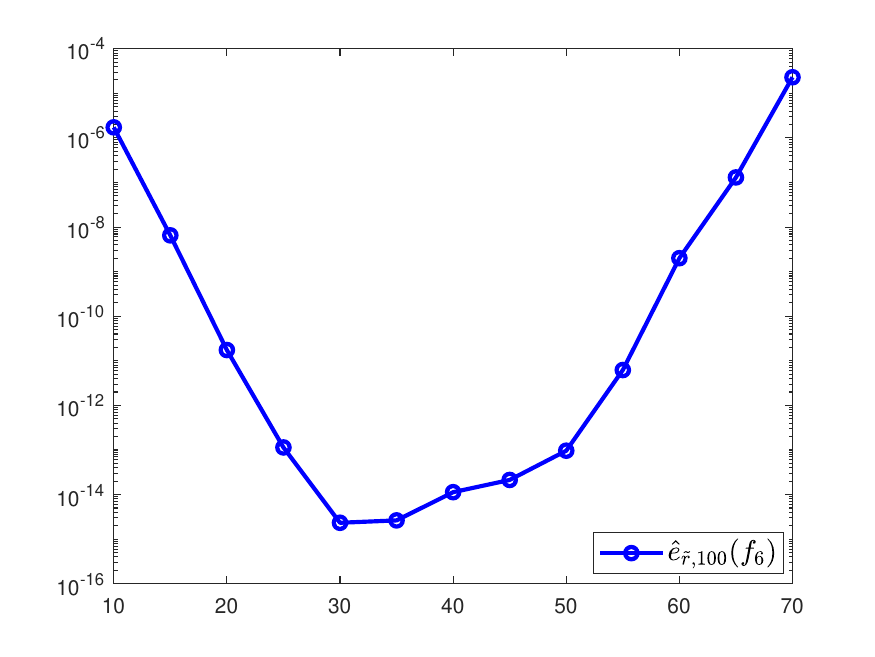}
 \caption{Trend of the maximum approximation errors for the functions $f_4$ (left), $f_5$ (center), $f_6$ (right) produced by  $\hat{\Pi}_{\tilde{r},100}\left[f_i\right]$ ($\hat{e}_{\tilde{r},100}\left(f_i\right)$) with $m$ varying from $m=10$ to $m=70$, and $\tilde{r} = m + \left\lfloor \sqrt{m} \right\rfloor$.}
 \label{ex2m}
\end{figure}

\begin{figure}
  \centering
\includegraphics[width=0.49\textwidth]{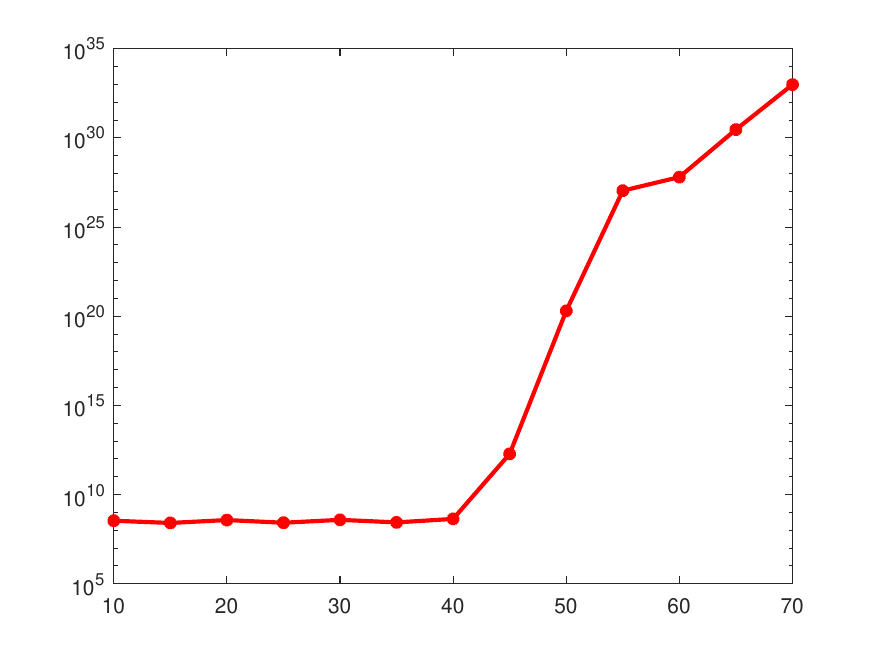}
 \caption{Trend of the condition number of the KKT matrix computed for $n=100$, with $m$ varying from $m=10$ to $m=70$, and $r = m + \left\lfloor \sqrt{m} \right\rfloor$.}
 \label{cond}
\end{figure}

\noindent
Finally, to assess the quality of the approximation, we use the operator $\hat{\Pi}_{\tilde{r},n}$ to reconstruct the functions $f_i$, $i=1,\dots,6$. In particular, we set $n=80$, $m=20$, $\tilde{r}=24$ and consider the set $\tilde{S}_{n,n}^1$. The results are shown in Fig.~\ref{ex1plot}.

\begin{figure}
  \centering
\includegraphics[width=0.32\textwidth]{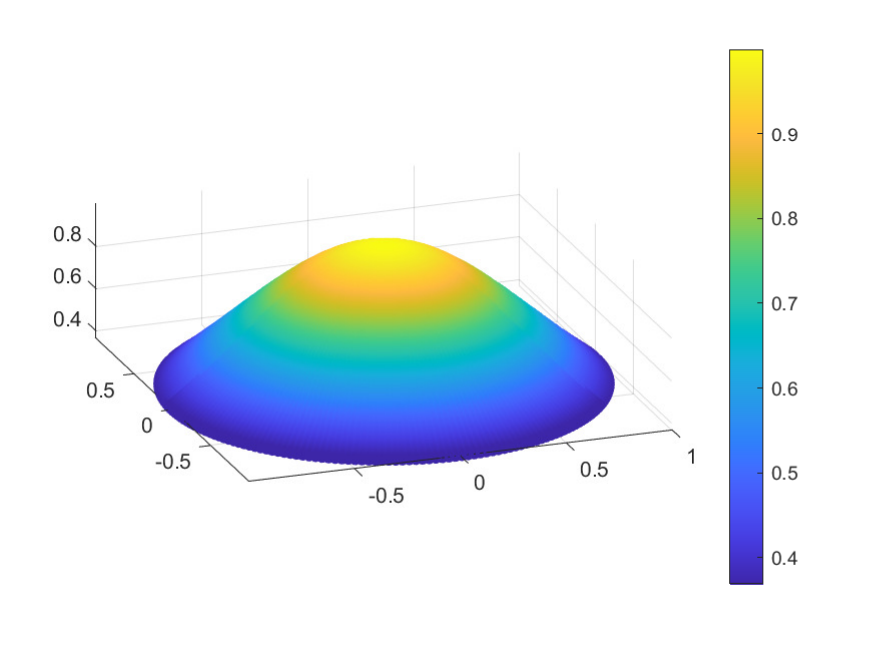} 
\includegraphics[width=0.32\textwidth]{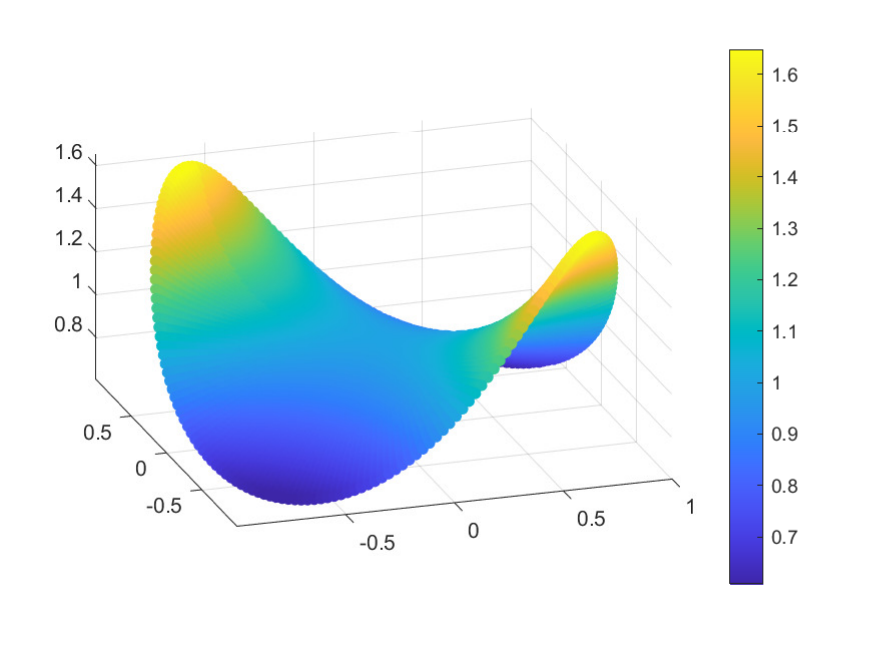} 
\includegraphics[width=0.32\textwidth]{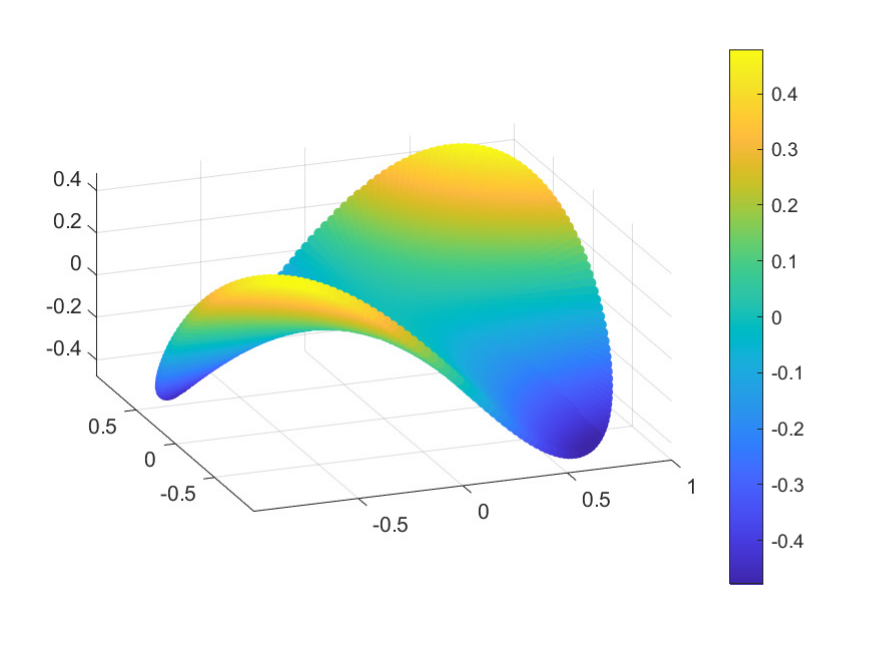}
\includegraphics[width=0.32\textwidth]{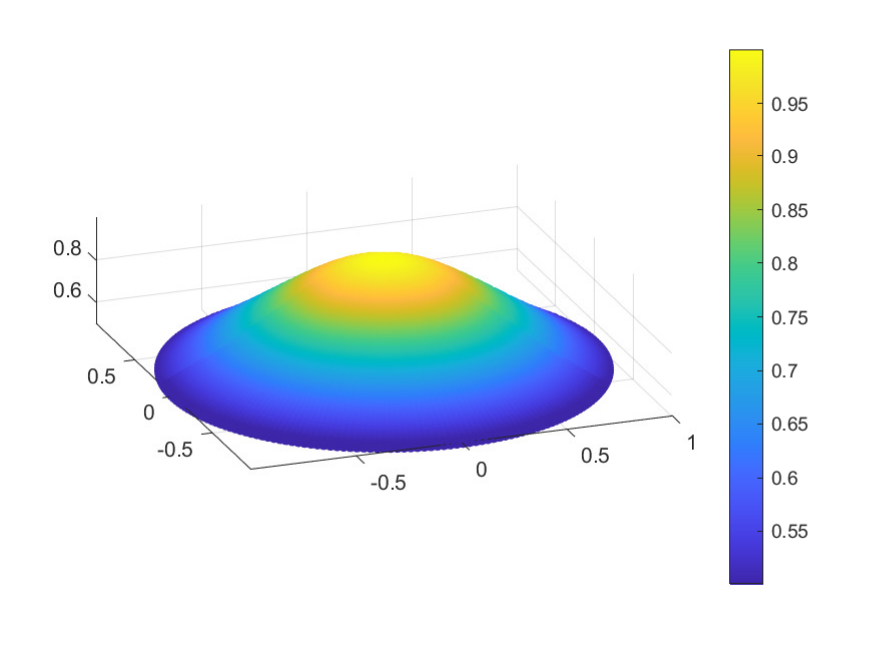} 
\includegraphics[width=0.32\textwidth]{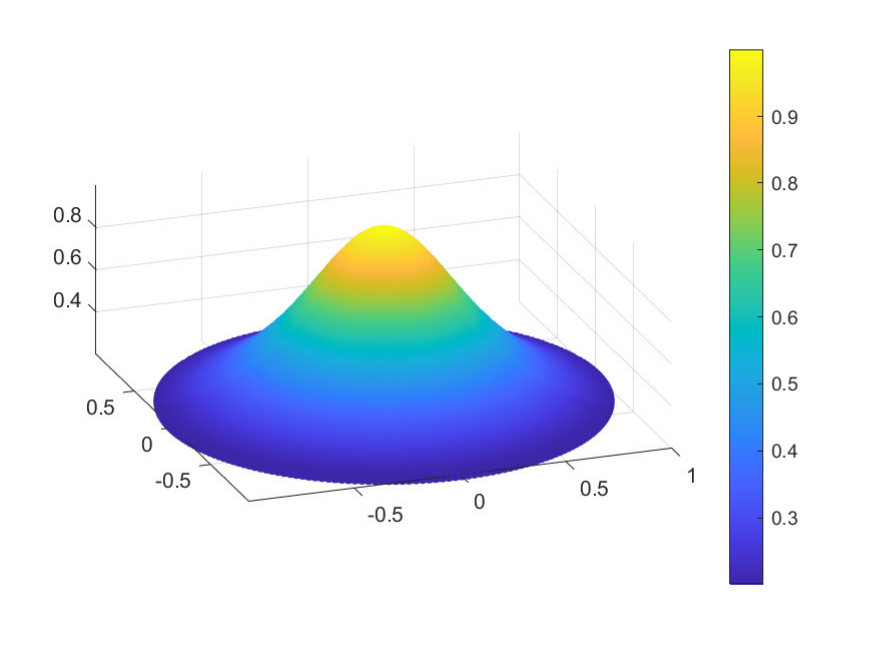} 
\includegraphics[width=0.32\textwidth]{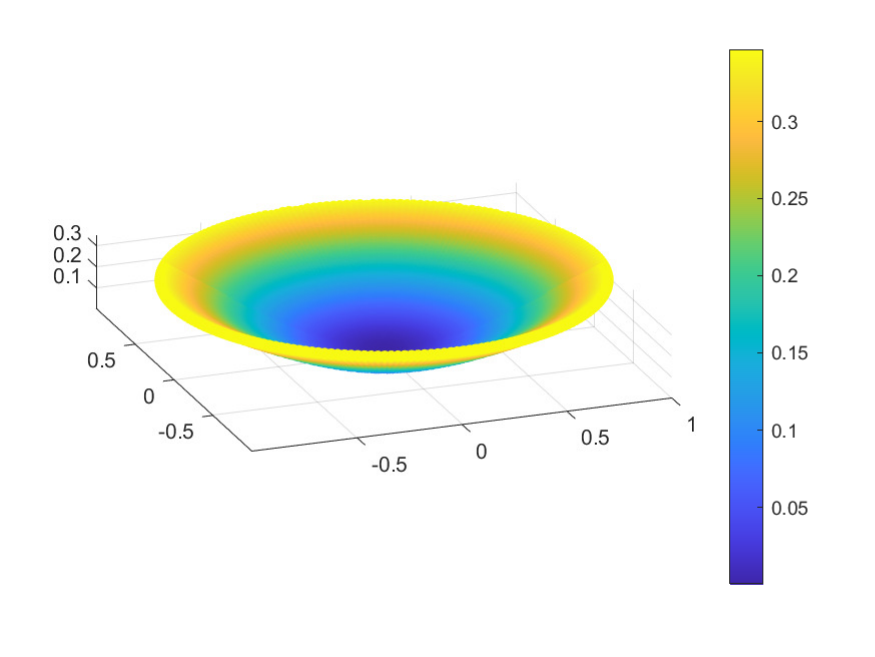}
 \caption{Reconstruction of the functions $f_1$ (left), $f_2$ (center), $f_3$ (right) (top) and  $f_4$ (left), $f_5$ (center) and $f_6$ (right) (bottom) using the operator $\hat{\Pi}_{\tilde{r},n}$ on the set of nodes $\tilde{S}_{80,80}^1$.}
 \label{ex1plot}
\end{figure}

\subsection{Cubature formula}
In this section, we test the accuracy of the cubature formula~\eqref{quadFor} for approximating the integrals
\begin{equation}\label{ints}
   I\left[f_i\right]=\int_{D_1}f_i(x,y)dxdy, \quad i=1,\dots,6,
\end{equation}
where $D_1$ denotes the unit disk centered at $\boldsymbol{0}=(0,0)$.
To this aim, we set 
\begin{equation*}
    m=\left\lfloor \frac{n}{4} \right\rfloor, \quad r=m+\left\lfloor m\right\rfloor, \quad P=10,
\end{equation*}
and compute the errors
\begin{equation*}
    e_{n,P}\left[f_i\right]=\left\lvert I\left[f_i\right]-{\hat{Q}_{\tilde{r},n,P}\left[f_i\right]}\right\rvert, \quad i=1,\dots,6,
\end{equation*}
obtained from the operator $\hat{\Pi}_{\tilde{r},n}$ relative to the set $\tilde{S}^1_{n,n}$ with $n=20k$, $k=1,\dots,5$.  The software to compute the cubature nodes on the disk is available at the website \url{https://www.math.unipd.it/~alvise/sets.html}.

\begin{table}[ht!]
    \centering
    \begin{tabular}{|c|c|c|c|c|c|}\hline
      $e_{n,10}$  & $n=20$ & $n=40$ & $n=60$ & $n=80$ & $n=100$\\ \hline
        $f_1$ &    1.3546e-04 	 &  2.2690e-09	 &  4.6141e-13 	 &   8.4377e-15 & 3.9968e-14 \\
        $f_2$ &  9.2933e-06 &  5.2194e-10 & 2.9310e-14 & 4.8850e-15 &   2.2204e-15  \\
        $f_3$ &   2.6073e-17 &    8.8915e-18 &   2.9867e-17 	 &     1.3612e-17 &  3.0595e-17  \\
        $f_4$ &   1.0660e-03  &  5.9728e-07   &  5.0668e-08	 	 &  5.5038e-11   & 2.0104e-12\\
        $f_5$ &   2.1286e-02 &  5.2240e-04 & 1.1999e-04 	 &  6.1720e-07 & 5.6009e-07  \\
        $f_6$ &  1.7709e-04  &  3.6233e-08   &  3.5316e-09	 	 & 3.2389e-12   & 8.8152e-14\\\hline
    \end{tabular}
    \caption{Approximation errors produced by approximating the integrals~\eqref{ints} through the cubature formula $\hat{Q}_{\tilde{r},n,P}$ on $P=10$ Gaussian cubature nodes.}
    \label{tab:cub}
\end{table}
\noindent
From Table~\ref{tab:cub}, we observe that the error decreases as the cardinality of the set $\tilde{S}_{n,n}^1$ increases. This trend aligns with the behavior of the approximation errors produced by the operator $\hat{\Pi}_{\tilde{r},n}$.

\section{Conclusions and Future Works}
In this paper, we have introduced a new interpolation-regression approximation operator for reconstructing functions defined on disk domains. By extending the constrained mock-Chebyshev least squares framework to circular geometries, we have successfully combined an optimal interpolation strategy (via the Bos array and Zernike polynomials) with a simultaneous regression step that exploits all available sampling data. Our analysis provided rigorous error bounds for the norm of the interpolation-regression operator, and numerical experiments confirmed the effectiveness of the method for both function reconstruction and the derivation of high-accuracy cubature formulas on the disk. The proposed methodology not only circumvents the challenges associated with standard interpolation (such as the Runge phenomenon) but also leverages the underlying structure of the optimal nodes to achieve stability and high-accuracy approximations.  These results demonstrate the potential of the approach in various applications, including optical engineering and numerical integration over circular domains.

Future research directions include:
\begin{itemize}
    \item \textbf{Extension to Other Domains:} Investigating the applicability of the interpolation-regression framework to more general domains (e.g., ellipsoidal or higher-dimensional spherical domains) and to function spaces defined on manifolds.
    \item \textbf{Robustness and Efficiency:} Enhancing the numerical stability and computational efficiency of the method, particularly for large-scale problems or when dealing with noisy data.
    \item \textbf{Generalized Weight Functions:} Extending the analysis to cover a broader class of weight functions and studying their impact on the convergence properties of the operator.
    \item \textbf{Applications:} Applying the developed techniques to real-world problems in areas such as image processing, optical aberration correction, and other fields where disk-shaped domains naturally arise.
\end{itemize}

\section{Declarations}
\begin{itemize}
\item Corresponding author: Federico Nudo, email address: federico.nudo@unical.it
    \item Funding: This research was supported by the GNCS-INdAM 2025 project \lq\lq Polinomi, Splines e Funzioni Kernel: dall’Approssimazione Numerica al Software Open-Source\rq\rq. The work of F. Marcellán has also been supported by the research project PID2021-122154NB-I00 \emph{Ortogonalidad y Aproximación con Aplicaciones en Machine Learning y Teoría de la Probabilidad}, funded by MICIU/AEI/10.13039/501100011033 and by \lq\lq ERDF A Way of Making Europe\rq\rq.
    \item Authors' Contributions: All authors contributed equally to this article, and therefore the order of authorship is alphabetical.
    \item Conflicts of Interest: The authors declare no conflicts of interest.
    \item Ethics Approval: Not applicable.
    \item Data Availability: No data were used in this study.
\end{itemize}

\section*{Acknowledgments}
 This research has been achieved as part of RITA \textquotedblleft Research
 ITalian network on Approximation'' and as part of the UMI group \enquote{Teoria dell'Approssimazione
 e Applicazioni}. The research was supported by GNCS-INdAM 2025 project \lq\lq Polinomi, Splines e Funzioni Kernel: dall’Approssimazione Numerica al Software Open-Source\rq\rq.  The work of F. Marcell\'an has been supported by the research project PID2021- 122154NB-I00] \emph{Ortogonalidad y Aproximación con Aplicaciones en Machine Learning y Teoría de la Probabilidad} funded  by MICIU/AEI/10.13039/501100011033 and by \lq\lq ERDF A Way of making Europe\rq\rq.

\bibliographystyle{spmpsci}
\bibliography{bibliography}

 \end{document}